\newcommand{\be}{\begin{equation}}
\newcommand{\ee}{\end{equation}}
\newcommand{\T}{{\bf T}}
\newcommand{\R}{{\mathbb R}}
\newcommand{\Z}{{\mathbb Z}}
\newtheorem{thm}{Theorem}[section]
\newtheorem*{thm*}{Theorem}
\newtheorem{prop}[thm]{Proposition}
\newtheorem{con}[thm]{Conjecture}
\theoremstyle{definition}
\begin{document}
\title{Pattern Equivariant Mass Transport in Aperiodic Tilings and Cohomology}
\author{Michael Kelly and Lorenzo Sadun}
%\date{\today}
%\version{2}

\address{Michael Kelly \\ Center for Communications Research \\ Princeton, NJ 08540} 
\email{mskelly@idaccr.org}
\address{Lorenzo Sadun\\Department of Mathematics\\The University of
 Texas at Austin\\ Austin, TX 78712} 
\email{sadun@math.utexas.edu}
%\thanks{The work of the second author is partially supported by NSF
%  grant  DMS-1101326} 

\date{\today}

\begin{abstract}
Suppose that we have a repetitive and aperiodic tiling $\T$ of $\R^n$, and two 
mass distributions $f_1$ and $f_2$ 
on $\R^n$, 
each pattern equivariant with respect
to $\T$. Under what circumstances is it possible to do a bounded transport
from $f_1$ to $f_2$? When is it possible to do this transport in a 
strongly or weakly pattern-equivariant way? We reduce these questions to
properties of the \v Cech cohomology of the hull of $\T$, properties that
in most common examples are already well-understood.  
\end{abstract}

\maketitle

\setlength{\baselineskip}{.6cm}

%\setcounter{tocdepth}{3}
%\makeatletter
%\def\l@subsection{\@tocline{2}{0pt}{25pt}{5pc}{}}
%\def\l@subsubsection{\@tocline{2}{0pt}{50pt}{5pc}{}}
%\makeatother
%\tableofcontents

\section{Introduction and Results}
\label{Intro}

A classic problem of transport can be phrased as follows. Given two
countable and uniformly discrete point sets $X_1$ and $X_2$ in $\R^n$,
does there exist a bijection $b: X_1 \to X_2$ such that the distance
from points $x \in X_1$ to corresponding points $b(x) \in X_2$ is
uniformly bounded?  Such a bijection, with $|b(x)-x|$ uniformly
bounded, is call a {\em bounded transport} from $X_1$ to $X_2$, and
$X_2$ is said to be of {\em bounded displacement (BD)} from $X_1$. The
existence of bounded transport is governed by the Hall Marriage
Theorem and the proof of the Schr\"oder-Bernstein theorem (as in
\cite{Halmos}, \cite{Whyte}).

For any compact subset $U \in \R^n$, let $\|U\|_1$ be the number of
points in $U \cap X_1$ and let $\|U\|_2$ be the number of points in $U
\cap X_2$. Let $|U|$ denote the volume of $U$. For each constant
$r>0$, let $U_r = \{x \in \R^n | d(x,U) \le r\}$ be the closed
neighborhood of radius $r$ around $U$, and let $U_{-r} = \{ x \in U |
d(x, U^c) < r\}$ be the complement of the open neighborhood of radius $r$ around
$U^c$.

\begin{thm}[Hall Marriage Theorem]\label{Hall1}
There exists a bounded transport $b: X_1 \to X_2$ 
with $\sup\{|b(x)-x|\} \le r$ if and only if, for every compact set 
$U \in \R^n$, $\|U_r\|_1 \ge \|U\|_2$ and $\|U_r\|_2 \ge \|U\|_1$. 
\end{thm}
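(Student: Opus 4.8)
The plan is to recast the question as the existence of a perfect matching in a bipartite graph and then invoke the infinite form of Hall's theorem. Build a bipartite graph $G$ with vertex classes $X_1$ and $X_2$, joining $x \in X_1$ to $y \in X_2$ by an edge precisely when $|x-y| \le r$. A bounded transport $b$ with $\sup\{|b(x)-x|\}\le r$ is exactly a perfect matching of $G$, i.e.\ a matching saturating both $X_1$ and $X_2$. The key structural fact is that $G$ is locally finite: since $X_1$ and $X_2$ are uniformly discrete, each closed ball of radius $r$ in $\R^n$ meets $X_1 \cup X_2$ in only finitely many points, so every vertex of $G$ has finite degree.

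\emph{Necessity.} Suppose $b$ exists. Fix a compact $U$. Each $y \in U \cap X_2$ equals $b(x)$ for a unique $x \in X_1$, and $|x-y|\le r$ together with $y \in U$ forces $x \in U_r$; by injectivity of $b$ this produces $\|U\|_2$ distinct points of $U_r \cap X_1$, so $\|U_r\|_1 \ge \|U\|_2$. Symmetrically, every $x \in U \cap X_1$ has $b(x) \in U_r \cap X_2$, and distinct $x$ give distinct $b(x)$, so $\|U_r\|_2 \ge \|U\|_1$.

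\emph{Sufficiency.} Assume the two counting inequalities for every compact $U$. First I reduce them to Hall's condition in $G$: given a finite $S \subseteq X_1$, take $U = S$ (a compact set); then $U_r \cap X_2$ is exactly the neighborhood $N_G(S)$ and $\|U\|_1 = |S|$, so the hypothesis $\|U_r\|_2 \ge \|U\|_1$ says $|N_G(S)| \ge |S|$. Taking $U = S' \subseteq X_2$ finite and using $\|U_r\|_1 \ge \|U\|_2$ gives $|N_G(S')| \ge |S'|$ as well. Now apply the marriage theorem for locally finite bipartite graphs — the one-sided infinite Hall theorem, deduced from the finite Hall theorem by a compactness (König's lemma) argument that uses precisely the local finiteness of $G$ — to obtain a matching $M_1$ saturating $X_1$ and a matching $M_2$ saturating $X_2$. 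Finally I run the Schr\"oder--Bernstein step: $M_1 \cup M_2$ is a subgraph of maximum degree $2$, so its components are single edges, finite alternating paths, finite even alternating cycles, and one- or two-sided infinite paths; an endpoint of a path that lies in $X_1$ (resp.\ $X_2$) must have its unique incident edge in $M_1$ (resp.\ $M_2$), which excludes finite paths with both endpoints in the same class, so each component carries a perfect matching of its own vertices, and the union of these is the desired bijection $b$ with $\sup\{|b(x)-x|\}\le r$.

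\emph{Main obstacle.} The substance of the argument is the passage from the finite marriage theorem to this infinite, locally finite setting and the merging of $M_1$ with $M_2$; uniform discreteness of $X_1$ and $X_2$ is exactly what supplies the local finiteness that makes the compactness step work. Verifying that no component of $M_1\cup M_2$ can obstruct a perfect matching is then a routine parity check on path endpoints.
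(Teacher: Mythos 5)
Your proof is correct, and it follows exactly the route the paper itself indicates: the paper offers no proof of Theorem~\ref{Hall1}, stating only that bounded transport ``is governed by the Hall Marriage Theorem and the proof of the Schr\"oder--Bernstein theorem'' with references to Halmos and Whyte, and your argument (infinite Hall via local finiteness and compactness to get matchings saturating each side, then the parity analysis of the components of $M_1\cup M_2$) is precisely that standard argument, filled in correctly. The one point worth making explicit is that uniform discreteness of $X_1$ and $X_2$ is what guarantees local finiteness of the bipartite graph, which you do note.
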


An important special case is where $X_1$ has a well-defined density
$\rho$ and where $X_2$ is a lattice of the same density. In that case,
Laczkovich \cite{Laczkovich1,Laczkovich2} (see also \cite{DKLL,SodinTsirelson,Whyte}) showed that

\begin{thm}\label{L1} If $n=2$, then $X_1$ is BD to a lattice if and
  only if there
exist constants $c_1$ and $c_2$ such that, for 
all topological disks 
$U$, $\big | \|U\|_1 - \rho |U| \big | 
\le c_1 + c_2 |\partial U|$, where $|U|$ is the
area of $U$ and $|\partial U|$ is the perimeter of $U$.
\end{thm}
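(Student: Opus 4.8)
The plan is to pass from bounded transport to a discrete divergence equation with bounded data, and then to recognize Laczkovich's discrepancy inequality as exactly the solvability condition for that equation when the target is a lattice. After rescaling $\R^2$ we may take the lattice to be $\Z^2$ and $\rho=1$ (the stated inequality is in any case the same for all lattices of a given density, since a lattice has discrepancy $O(|\partial V|)$ over any region $V$). Fix the grid of unit cells $Q_m=m+[0,1)^2$, $m\in\Z^2$, and set $g(m)=\|Q_m\|_1-1\in\Z$. I would first establish the equivalence: $X_1$ is BD to $\Z^2$ $\iff$ there is a bounded integer flow $\phi$ on the edge graph of $\Z^2$ with $\operatorname{div}\phi=g$. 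The forward implication is immediate — a transport with $|b(x)-x|\le R$ is realized by a vector measure carried by the segments $[x,b(x)]$, whose divergence is $\sum_x(\delta_x-\delta_{b(x)})$; integrating over cells yields such a $\phi$. For the converse one routes the cellwise surplus $g(m)$ along $\phi$ in a bounded number of synchronized rounds, moving each point by $O(\|\phi\|_\infty)$ cells; keeping this displacement uniformly bounded is the one spot where boundedness of $\phi$ is really used, and it is done by a standard finite-round routing scheme.

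\emph{Necessity} of the discrepancy bound is then elementary. From a bounded transport with displacement $\le R$ one gets, for any topological disk $U$, both $\|U\|_1\le\|U_R\|_2$ and $\|U_{-R}\|_2\le\|U\|_1$. Combining these with the lattice count $\bigl|\,\|V\|_2-|V|\,\bigr|\le C(|\partial V|+1)$ valid for regions with rectifiable boundary, and with the planar Steiner-type estimates $|U_{\pm R}\triangle U|\le R\,|\partial U|+\pi R^2$ and $|\partial U_{\pm R}|\le|\partial U|+2\pi R$, yields $\bigl|\,\|U\|_1-|U|\,\bigr|\le c_1+c_2|\partial U|$ with $c_1,c_2$ depending only on $R$.

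For \emph{sufficiency} — the real content — I would first reformulate the hypothesis as the purely combinatorial bound $\bigl|\sum_{m\in A}g(m)\bigr|\le C|\partial A|$ for every finite $A\subseteq\Z^2$, where $|\partial A|$ counts the edges from $A$ to its complement; by a max-flow/min-cut (Hoffman circulation) argument applied to finite exhaustions and then passed to the limit by compactness, using integrality, this combinatorial bound is precisely what produces the bounded integer flow $\phi$ above, and the first paragraph then finishes. When $A$ is a simply connected polyomino the cell union $U_A=\bigcup_{m\in A}\overline{Q_m}$ is a topological disk with $|\partial U_A|=|\partial A|$ and $\sum_{m\in A}g(m)=\|U_A\|_1-|U_A|$, so the disk hypothesis applies verbatim. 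For a general finite $A$ one uses that $\Z^2\setminus A$ has a unique unbounded component; filling its bounded components (``holes'') gives a simply connected polyomino $\hat A$ with $\sum_A g=\sum_{\hat A}g-\sum_{H\text{ a hole}}\sum_{m\in H}g(m)$, and one recurses on each hole. The planar fact that makes the bookkeeping close is that the outer boundaries of all the simply connected pieces produced by this recursion are pairwise disjoint subsets of $\partial A$; hence there are at most $\tfrac14|\partial A|$ pieces and their perimeters sum to at most $|\partial A|$, so $\bigl|\sum_A g\bigr|\le c_1\cdot(\#\text{pieces})+c_2\cdot(\text{total perimeter})\le(c_1+c_2)|\partial A|$.

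I expect this recursive hole decomposition — and especially the verification that its outer boundaries are disjoint and charge injectively into $\partial A$ — to be the main obstacle: not because it is deep, but because it is exactly where $n=2$ enters, exploiting that the complement of a finite set in the plane has a single unbounded component and that its bounded complementary regions organize into a forest whose total boundary length is controlled by $|\partial A|$. In dimension $\ge 3$ this structure fails, the passage from ``discrepancy over balls'' to ``discrepancy over all sets'' is no longer automatic, and the obstruction becomes genuinely cohomological — the viewpoint the rest of the paper develops. The secondary technical point is the flow-to-transport routing of the first paragraph; everything else (rescaling, lattice point counts, Steiner estimates, Hoffman duality on $\Z^2$) is routine.
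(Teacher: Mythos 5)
The paper does not prove this statement; it is quoted from Laczkovich \cite{Laczkovich1,Laczkovich2}, so there is no internal proof to compare against. Your architecture does match the one in that literature (reduce to $\Z^2$, pass to the cell-discrepancy function $g$, treat the problem as a discrete divergence equation, and handle the planar combinatorics by filling holes), and the part you single out as the main obstacle is in fact fine: the recursive hole decomposition, the verification that the outer boundaries of the resulting simply connected pieces are pairwise disjoint edges of $\partial A$, and the resulting bound $\bigl|\sum_A g\bigr|\le (c_1/4+c_2)\,|\partial A|$ all work as you describe, and this is indeed exactly where $n=2$ enters.

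The genuine gap is in the step you dismiss as secondary: converting a bounded integer flow $\phi$ with $\operatorname{div}\phi=g$ into a bounded-displacement bijection. The ``synchronized rounds'' scheme moves tokens through intermediate cells, and a cell that the flow merely passes through may contain no point of $X_1$ at all (and a cell with one point may be asked to forward $4\|\phi\|_\infty$ of them); the analogous argument in the paper's Theorem \ref{two-three} succeeds only because the continuous densities there are bounded below by $\epsilon>0$, so every tile always has mass to forward. Nor does path decomposition of $\phi$ help, since a bounded flow can have infinitely long flow paths, and the naive verification of the Hall condition of Theorem \ref{Hall1} fails because $|A_r\setminus A|$ need not dominate $C\,|\partial A_r|$ for sets with crinkly boundary. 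Bridging this is the actual content of Laczkovich's theorem and requires a genuinely multiscale argument (Laczkovich's dyadic decomposition, or the careful Hall/F{\o}lner verification in \cite{Whyte}, or the vector-field approach of \cite{SodinTsirelson}); as written, your proposal identifies the right reduction but does not contain the idea that closes it.
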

(A similar theorem applies to $n>2$, with two small adjustments. 
$U$ must be a topological ball, and one must either
restrict $U$ to a union of unit cubes with vertices at integer points, 
or replace $|\partial U|$ with $|U_1|-|U_{-1}|$.)

A simple generalization is where the discrete point sets $X_1$ and
$X_2$ are replaced by continuous mass distributions on $\R^n$.  If
$f_1$ and $f_2$ are non-negative functions in $L^1_{loc}(\R^n)$, we
can seek a non-negative function
$b \in L^1_{loc}(\R^n \times \R^n)$ and
a constant $r$ such that
$$ \int_{\R^n} b(x,y) dy = f_1(x), \qquad \int_{\R^n} b(x,y) dx = f_2(y),
\qquad b(x,y)=0 \hbox{ when } |x-y|>r. $$
Theorems similar to (\ref{Hall1}) and (\ref{L1}) are well-known.   

In this paper we impose restrictions on the point patterns $X_1$ and
$X_2$, or the continuous distributions $f_1$ and $f_2$. Given a
repetitive and aperiodic tiling $\T$ of $\R^n$ and two positive
strongly pattern equivariant (PE) mass distributions\footnote{See Section 2
for precise definitions of strong and weak pattern equivariance.}  
$f_1$ and $f_2$,
we ask:
\begin{enumerate}
\item When does there exist bounded transport from $f_1$ to $f_2$?
\item When is it possible to do this transport in a weakly PE way?
\item When is it possible to do this transport in a strongly PE way?
\end{enumerate}
Strongly PE transport is automatically weakly PE, and weakly PE
transport is automatically bounded, but do there exist mass
distributions $f_{1,2}$ that admit bounded transport without admitting
weakly PE transport, or that admit weakly PE transport without
admitting strongly PE transport?

For instance, consider a 1-dimensional Fibonacci tiling, generated by
the substitution $a \to ab$, $b \to a$, with each $a$ tile having length
$\phi = (1+\sqrt{5})/2$ and each $b$ tile having length 1. Let $f_1$ assign
mass 1 to every $a$ tile and mass 0 to every $b$ tile. Let $f_2$ assign mass
0 to every $a$ tile and $\phi$ to every $b$ tile. These distributions have the
same density, but is there a (bounded, weakly PE, or strongly PE) transport 
from one to the other?

\smallskip

\begin{figure}[htp] \label{fig1}
\begin{center}
 \includegraphics[scale=0.5]{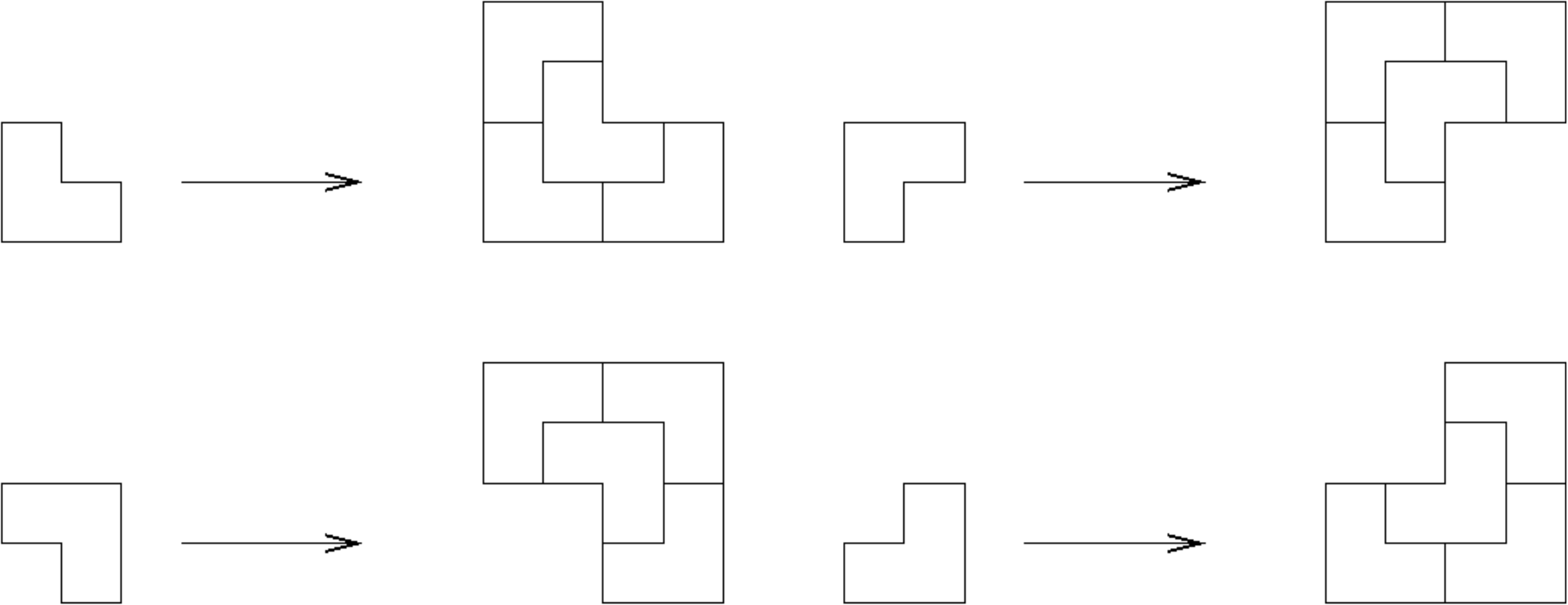}
 \caption{The chair substitution}
\end{center}
\end{figure}

\smallskip

A second example involves the 2-dimensional chair substitution, illustrated
in Figure \ref{fig1}. There are four species of L-shaped tiles, which we 
label by the piece of the $2 \times 2$ square that is missing. That is, the
first tile listed in Figure \ref{fig1} is NE (northeast), the second
is SE, the third is SW, and the last is NW.  
We consider three mass distributions, shown in Figure \ref{fig2}:
\begin{enumerate}
\item $f_1$ assigns mass 2 to NE tiles, and mass 0 to NW, SW, and SE tiles.
\item $f_2$ assigns mass 1 to NE and SW tiles, and mass 0 to NW and SE tiles.
\item $f_3$ assigns mass 0 to NE and SW tiles, and mass 1 to NW and SE tiles.
\end{enumerate}
As before, we ask which transports between $f_1$, $f_2$ and $f_3$ can be done
in a bounded, weakly PE, or strongly PE manner. 
  
\smallskip

\begin{figure}[htp] 
\begin{center}
 \includegraphics[scale=0.25]{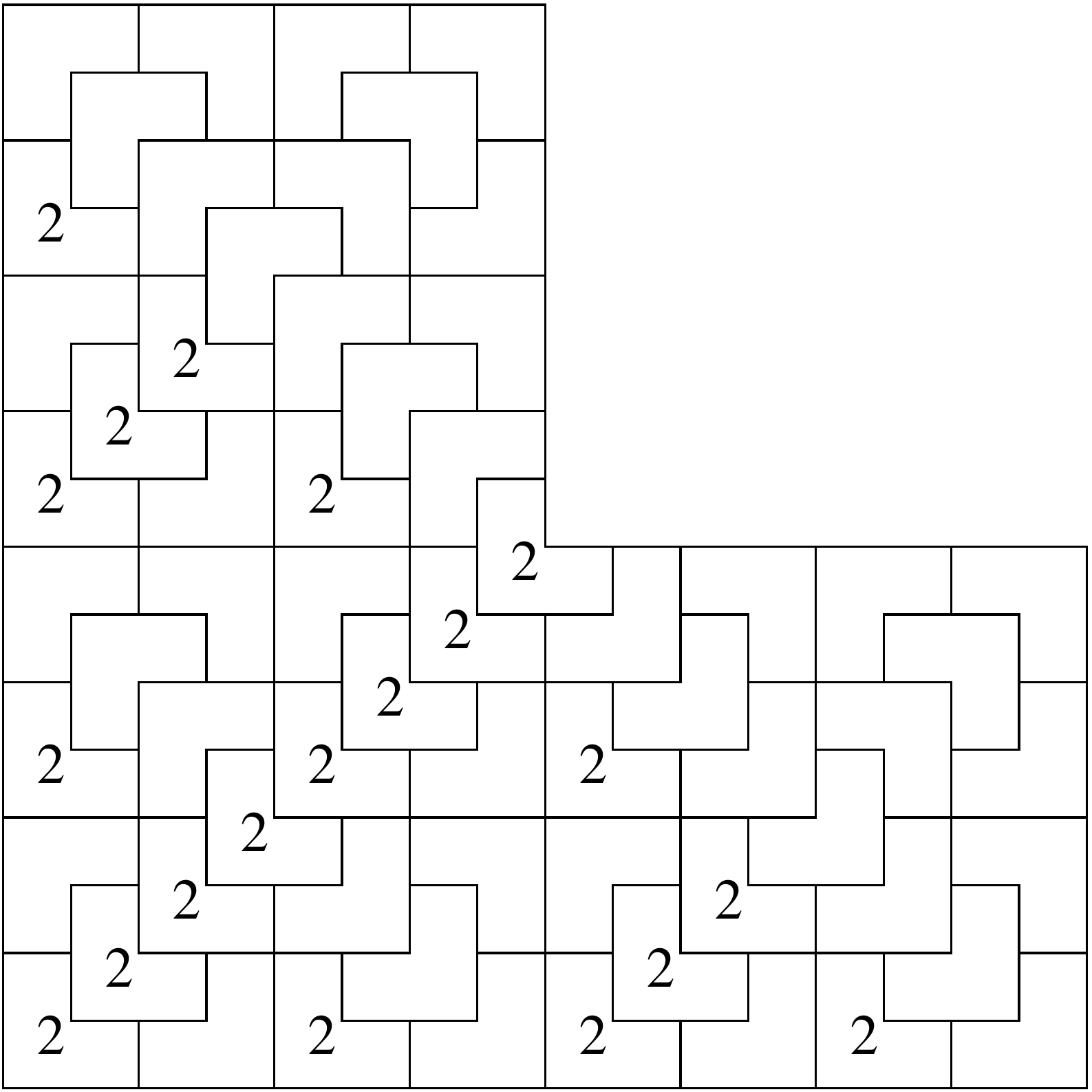} \qquad
\includegraphics[scale=0.25]{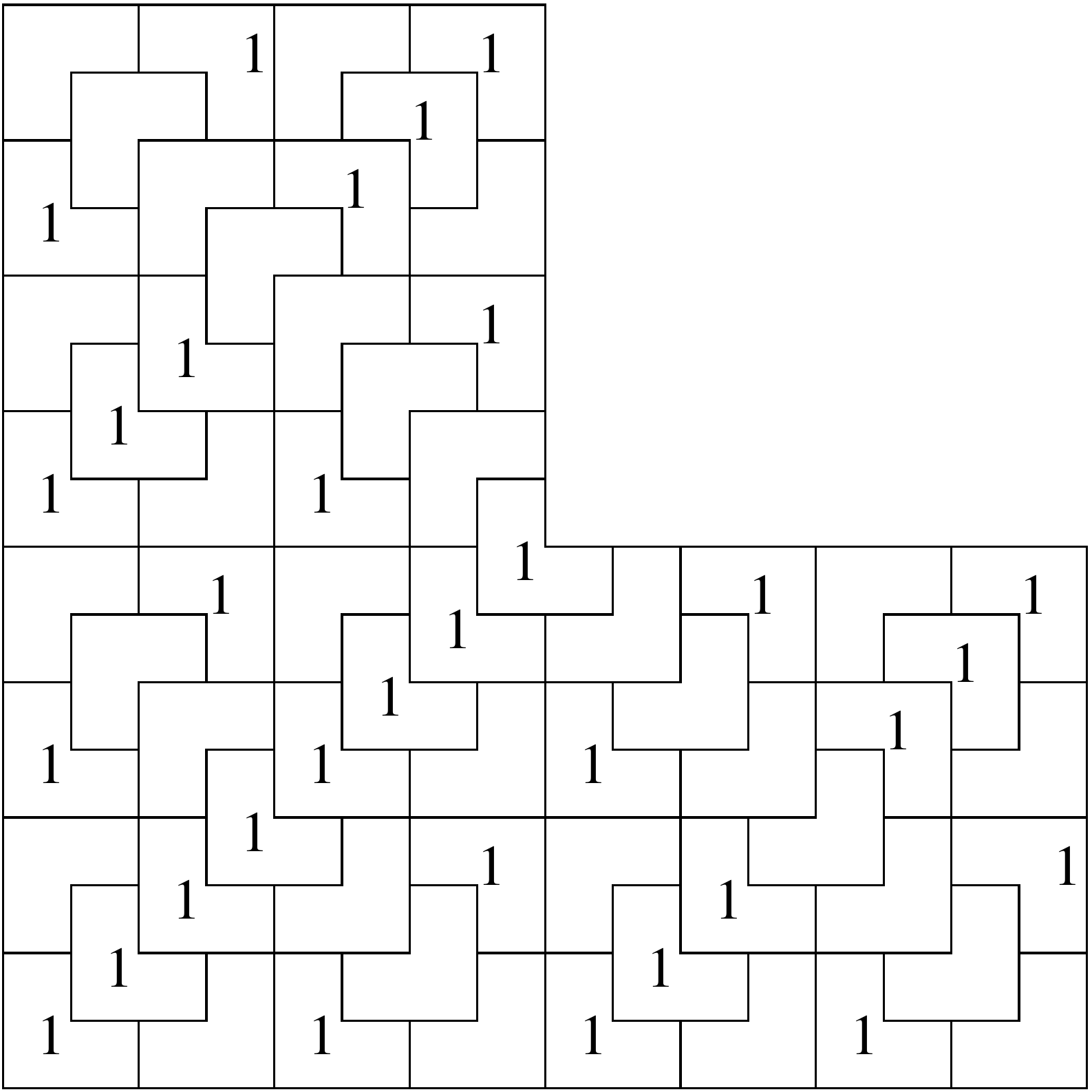} \qquad
\includegraphics[scale=0.25]{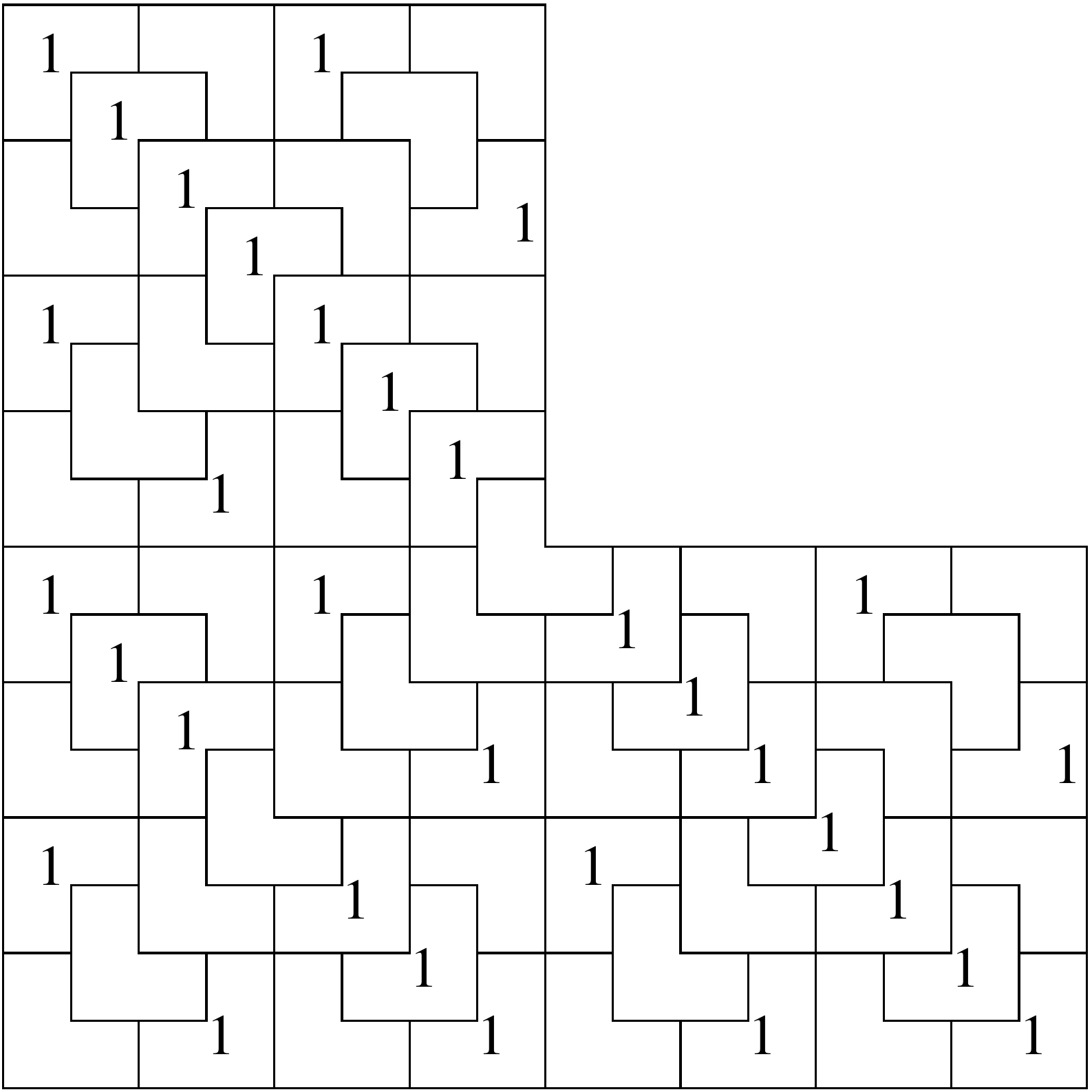}
\caption{Three different mass distributions on a patch of the chair tiling}
\label{fig2}
\end{center}
\end{figure}

\smallskip

We convert these questions to questions of cohomology. Since much is
already known about the cohomology of tiling spaces
(\cite{AP,FHK,KP,SadunInteger,SadunBook,SadunNewChapter,SadunWilliams}), this reduces many
problems of transport either to a simple look-up or to calculations
using well-established techniques \cite{CS,CS2,KS,KS2,KellySadun,SadunHierTilings,Sadun1D}.

We associate a class $[f_i] \in \check H^n(\Omega_\T, \R)$, the top
real-valued \v Cech cohomology of the continuous hull $\Omega_\T$ of
$\T$, to each mass distribution $f_i$. We will define subspaces of
$\check H^n(\Omega_\T, \R)$, called ``asymptotically negligible'' and
``well-balanced'' classes, and show that:

\begin{thm}\label{Thm1} 
  Let $[f_1]$ and $[f_2]$ be the classes in $\check H^n(\Omega,\R)$
  associated to the strongly PE mass distributions $f_1$ and
  $f_2$. Then
\begin{enumerate}
\item There exists a bounded transport from $f_1$ to $f_2$ if and only if 
$[f_1]-[f_2]$ is well-balanced. 
\item There exists a weakly PE transport from $f_1$ to $f_2$ if and only if 
$[f_1]-[f_2]$ is asymptotically negligible.
\item There exists a strongly PE transport from $f_1$ to $f_2$ if and only
if $[f_1]=[f_2]$. 
\end{enumerate}
\end{thm}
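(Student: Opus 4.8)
The plan is to build the three equivalences on top of a single dictionary between mass distributions and currents/cochains, and then to dispatch each case using standard tools. First I would set up the correspondence $f \mapsto [f]$ carefully: a strongly PE mass distribution of density zero (the relevant case, since transport cannot change total density) should be identified with the coboundary-free part of an $n$-cochain on a CW approximant to $\Omega_\T$, or equivalently with a translationally invariant current; the key point is that $[f_1]-[f_2]=0$ in $\check H^n(\Omega_\T,\R)$ exactly when $f_1-f_2 = \operatorname{div}(V)$ for some strongly PE vector field $V$. This is the engine for part (3): a strongly PE transport $b(x,y)$ supported near the diagonal is essentially the data of such a $V$ (integrate the ``flow'' of mass across each hyperplane), so strongly PE transport from $f_1$ to $f_2$ exists iff $f_1-f_2$ is a strongly PE divergence iff $[f_1]=[f_2]$. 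I would prove both implications of (3) first, since it is the cleanest and fixes the conventions; the forward direction is a direct computation producing $V$ from $b$, and the reverse builds $b$ from $V$ by pushing mass along $V$.

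Next I would treat (1), the purely metric statement, via the Hall Marriage Theorem (\ref{Hall1}) in its continuous form. The idea is to discretize: since $f_1,f_2$ are PE and bounded, a bounded transport exists iff the discrepancy functional $U \mapsto \int_U (f_1-f_2)$ is $O(|U_r|-|U_{-r}|)$ uniformly over compact $U$ — this is exactly the continuous Hall condition, and I would show the ``well-balanced'' subspace of $\check H^n(\Omega_\T,\R)$ is by definition the set of classes whose representing cocycles satisfy precisely this linear-in-boundary growth bound, so that the class $[f_1]-[f_2]$ captures the obstruction and nothing else. The one subtlety is that the Hall condition as stated is not obviously cohomological: I must check that the growth rate of $\int_U(f_1-f_2)$ depends only on $[f_1]-[f_2]$, which follows because changing the representative changes the integral by a strongly PE divergence, hence by a boundary term of size $O(|U_r|-|U_{-r}|)$, which is absorbed. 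So well-balancedness of the class is equivalent to the Hall inequalities holding for some $r$, which by (\ref{Hall1}) is equivalent to bounded transport.

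For (2), weakly PE transport, I would interpolate between the two: weak pattern equivariance means the transport is a uniform limit of strongly PE transports, so a weakly PE transport exists iff $f_1-f_2$ is a uniform limit of strongly PE divergences, i.e.\ iff $[f_1]-[f_2]$ lies in the closure (in the appropriate seminorm/topology on cochains) of the image of the strong-PE divergence operator — this closure is exactly what I would take as the definition of ``asymptotically negligible.'' The work here is to show that the weakly PE functions are precisely the closure of the strongly PE functions in the right norm (sup norm on the relevant cochain groups), and that the divergence and the closure operation interact well, i.e.\ that a class is a limit of coboundaries iff it is represented by cocycles of sublinear growth — this is where a Gottschalk–Hedlund-type argument enters, trading a uniform approximation estimate for an exact primitive on the hull. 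I expect the \textbf{main obstacle} to be exactly this last point in (2): controlling the passage from ``approximately a coboundary'' (weak PE, quantitative) to a genuine cohomological vanishing statement, and in particular proving that the asymptotically negligible subspace is closed and is detected by the growth rate of cocycle representatives. The cases (1) and (3) are comparatively mechanical once the $f \mapsto [f]$ dictionary and the continuous Hall theorem are in hand; (2) is where the cohomology of the hull does real work.
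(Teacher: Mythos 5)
Your treatment of parts (1) and (3) follows essentially the same route as the paper: the engine is a dictionary (the paper's Theorem \ref{two-three}) identifying a transport from $f_1$ to $f_2$ with an $(n-1)$-cochain $\beta$ recording the net mass flux across each face, satisfying $\delta\beta=\alpha_1-\alpha_2$, with boundedness and strong/weak pattern equivariance passing back and forth between $b$ and $\beta$; your ``divergence of a strongly PE vector field $V$'' is just the de Rham avatar of this cochain. Part (1) then combines the Stokes estimate with the Hall Marriage Theorem \ref{Hall1} exactly as in the paper. One detail you elide in both (1) and (3): converting the well-balanced bound into the Hall inequalities, and running the converse construction of a transport from a given $\beta$, both use the strict positivity of $f_1,f_2$ (a uniform lower bound on the mass per tile), since mass is moved across faces in finitely many small steps and one must check that no tile is ever overdrawn. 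This is fixable and the paper does it explicitly.

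The genuine gap is in part (2). The paper's definition of ``asymptotically negligible'' is that $\alpha_1-\alpha_2=\delta\beta$ for some \emph{weakly PE} $(n-1)$-cochain $\beta$ --- an exact primitive that happens to be weakly PE --- and with that definition part (2) is immediate from the dictionary: a weakly PE transport yields a weakly PE flux cochain whose coboundary is exactly $\alpha_1-\alpha_2$, and conversely. You instead propose to take AN to be the closure of the image of the strongly PE coboundary operator and to prove that a weakly PE transport exists iff $f_1-f_2$ is a uniform limit of strongly PE divergences. That is an a priori larger subspace: a uniform limit of coboundaries $\delta\beta_k$ need not equal $\delta$ of anything weakly PE, because the primitives $\beta_k$ need not converge. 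Your forward implication is also off: the strongly PE functions approximating a weakly PE $b$ are not themselves transports between $f_1$ and $f_2$, so their divergences are only approximately $f_1-f_2$; the correct move is to take the flux cochain of $b$ itself. Finally, the Gottschalk--Hedlund-type repair you flag as the ``main obstacle'' --- upgrading sublinear growth or approximate exactness to an honest weakly PE primitive --- is not a lemma available in this generality: for top-degree classes in dimension $n>1$ that passage is precisely the paper's open conjecture (proved only in the special cases of Theorem \ref{Thm2}). Part (2) of Theorem \ref{Thm1} is formulated so as \emph{not} to need it; it is definitional once the transport--cochain dictionary is in place.
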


There are many examples of non-zero classes that are asymptotically
negligible, leading to mass distributions that admit weakly PE
transport but not strongly PE transport. In particular, we will see
that the Fibonacci example admits weakly PE but not strongly PE
transport from $f_1$ to $f_2$.

\begin{con} 
Let $\T$ be an arbitrary tiling that is repetitive and has finite
local complexity. Then every well-balanced class in $H^n(\Omega_\T,
\R)$ is asymptotically negligible.
\end{con}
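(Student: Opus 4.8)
\medskip

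\noindent\textit{Proposed approach to the Conjecture.} Since a strongly PE transport is weakly PE and a weakly PE transport is bounded, Theorem~\ref{Thm1} shows that the subspaces $\{[f]=0\}\subseteq\{\text{asymptotically negligible}\}\subseteq\{\text{well-balanced}\}$ of $\check H^n(\Omega,\R)$ are automatically nested, and the Conjecture asserts that the last two coincide --- equivalently, that for strongly PE $f_1,f_2$ of equal density a bounded transport from $f_1$ to $f_2$ can always be upgraded to a weakly PE one. Passing from transports to their net flux, the task becomes: if the strongly PE, zero-density $n$-form $f:=f_1-f_2$ admits a bounded $(n-1)$-form $\eta$ with $d\eta=f$, produce a weakly PE such $\eta$. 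Because weakly PE forms are exactly the uniform limits of strongly PE forms, it suffices to build strongly PE $(n-1)$-forms $\eta_0$ and $\gamma_1,\gamma_2,\dots$ with $\|\gamma_k\|_\infty\le 2^{-k}$ whose residuals $r_k:=f-d(\eta_0+\gamma_1+\cdots+\gamma_k)$ tend to $0$ weakly; then $\eta_0+\sum_k\gamma_k$ is the desired primitive. Each $r_k$ is again strongly PE and, being cohomologous to $f$, again well-balanced --- its well-balanced constant changes by at most $\|\gamma_k\|_\infty$ per step and so stays bounded --- so everything reduces to an iterable one-step lemma.

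\medskip

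\noindent In dimension one the scheme is unnecessary. For $n=1$ the admissible regions are intervals and $|\partial U|$ is the constant $2$, so ``well-balanced'' just means the discrepancies $\int_U f$ are uniformly bounded. Being pattern equivariant, $f$ may be taken continuous on the hull $\Omega$ (mollify by a PE bump, which alters neither $[f]$ nor the bound on discrepancies), and $\Omega$ is a minimal $\R$-flow by repetitivity, so the Gottschalk--Hedlund theorem supplies a continuous $G\colon\Omega\to\R$ whose flow derivative is $f$; its pullback to $\R$ is a weakly PE primitive. Thus the Conjecture holds for $n=1$, and in particular the Fibonacci pair admits weakly, but not strongly, PE transport.

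\medskip

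\noindent For $n\ge 2$ I would iterate the one-step lemma along a hierarchy of supertiles of diameters $L_k\to\infty$ --- intrinsic for substitution and fusion tilings, and for a general repetitive FLC tiling extracted from the inverse-limit tower $\Omega=\varprojlim\Gamma_k$ of Anderson--Putnam-type approximants. The lemma to be proved: a well-balanced strongly PE $n$-form $r$ of zero density admits, for each $\epsilon>0$, a strongly PE $\gamma$ with $\|\gamma\|_\infty<\epsilon$ such that $r-d\gamma$ has zero net flux through every scale-$L$ supertile, for $L$ large. The natural $\gamma$ lives on the $(n-1)$-dimensional walls of the scale-$L$ decomposition and, across the walls of a supertile $U$, must carry the flux $\int_U r$; since $\bigl|\int_U r\bigr|\le C|\partial U|$ by well-balancedness, this forces $\|\gamma\|_\infty$ only as large as the ratio $\bigl|\int_U r\bigr|/|\partial U|$, and $\gamma$ is strongly PE since there are finitely many supertile types. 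Once the net flux vanishes at scale $L$ it vanishes at every coarser aligned scale, so pushing $L_k\to\infty$ fast enough controls the residual at ever larger scales.

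\medskip

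\noindent The crux is that the ratio $\bigl|\int_U r\bigr|/|\partial U|$ need not tend to $0$ as $U$ runs over large supertiles. For a self-similar substitution with linear expansion $\lambda$ it behaves like $|\mu|^k/\lambda^{(n-1)k}$, where $\mu$ is the eigenvalue of the substitution on $\check H^n(\Omega,\R)$ governing $[r]$: if every eigenvalue besides the Perron value $\lambda^n$ has $|\mu|<\lambda^{n-1}$, the ratio decays, $\|\gamma_k\|_\infty\to0$, and the scheme closes. The obstruction is the \emph{resonant} case $|\mu|=\lambda^{n-1}$ --- necessarily with trivial Jordan block, or $r$ would fail to be well-balanced --- in which the flux at scale $L$ is of exact order $|\partial U|$, routing it costs a correction of order $1$ rather than $\epsilon$, and the one-step lemma fails as stated. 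Showing that a resonant well-balanced class is nonetheless asymptotically negligible --- i.e., upgrading a bounded non-PE primitive to a continuous one when the discrepancies grow precisely like the boundary --- is where a genuinely new idea is required. Since for substitution tilings the question reduces to the finitely many resonant eigenvalues, I would settle that case first (the arithmetic of the eigenvalues may even exclude the resonance outright), and only afterwards confront the general repetitive FLC setting, which offers no self-similar hierarchy on which to renormalize.
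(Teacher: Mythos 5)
The statement you are addressing is an open conjecture, not a theorem of the paper: the authors explicitly defer it to a sequel \cite{vaporware} and prove only two special cases (Theorem \ref{Thm2}: $n=1$, and codimension-1 cut-and-project tilings with canonical window). Your proposal is likewise not a proof, and you say so candidly --- the ``resonant'' case is exactly the gap. Where your outline overlaps with what the paper actually establishes, it agrees: your $n=1$ argument (well-balanced means bounded discrepancies on intervals, minimality of the hull, Gottschalk--Hedlund gives a continuous primitive on $\Omega$ whose pullback is weakly PE) is precisely the paper's proof of Theorem \ref{k_equals_1}, and your eigenvalue framework for substitutions matches the paper's treatment of the Fibonacci example, where $\check H^1_{AN}$ is identified with the contractive subspace under substitution via \cite{CS}.

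One concrete correction to your analysis of the resonant case $|\mu|=\lambda^{n-1}$. You assert that a trivial Jordan block forces the flux through a large region $U$ to be of exact order $|\partial U|$, so that the class is well-balanced but your one-step lemma fails. The paper's chair computation shows this is wrong: the class $i_{NE}-i_{SW}$ doubles under substitution (a simple eigenvalue $\mu=2=\lambda^{n-1}$ with $\lambda=2$, $n=2$), and while its integral over a \emph{complete} $m$-supertile is indeed comparable to the perimeter, the integral over the diagonally truncated region $R_n$ accumulates contributions from supertiles of every intermediate scale and equals $(n-1)2^n+1$ against a perimeter of $4\cdot 2^n$. The logarithmic factor means the class is \emph{not} well-balanced at all. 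So the resonant case, far from requiring ``a genuinely new idea'' to prove AN, may simply be excluded by the WB hypothesis --- partial supertiles generically pick up a $\log$-divergence whenever $|\mu|\ge\lambda^{n-1}$ and $[r]\ne 0$. Establishing that this exclusion always happens (including for nontrivial Jordan blocks and for eigenvalues of modulus strictly between the decaying range and $\lambda^{n-1}$) is the actual content the authors postpone to \cite{vaporware}; your renormalization scheme handles only the complementary, contractive range, which is the easy half.
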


In a subsequent paper we will address this conjecture, and its
generalization to classes in $H^k(\Omega_\T, \R)$ with $k<n$, in the
context of substitution tilings \cite{vaporware}.

In this paper we prove
\begin{thm}\label{Thm2} 
Let $\T$ be a repetitive and aperiodic tiling of $\R^n$. If
\begin{itemize}
\item $n=1$, or 
\item $\T$ is a codimension-1 cut-and-project tiling with canonical window, 
\end{itemize}
then every well-balanced class in $\check H^n(\Omega_\T, \R)$ is 
asymptotically negligible. In particular, given strongly PE mass distributions
$f_1$ and $f_2$, there exists 
a bounded transport from $f_1$ to $f_2$ if and only
if there exists a weakly PE transport from $f_1$ to $f_2$. 
\end{thm}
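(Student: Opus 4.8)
The plan is to reduce Theorem~\ref{Thm2} to an explicit analysis of $\check H^n(\Omega_\T,\R)$ in the two listed cases, showing by hand that the ``well-balanced'' condition already forces ``asymptotically negligible.'' Since Theorem~\ref{Thm1} has translated transport questions into cohomology, the only thing left is this purely cohomological containment of subspaces. First I would recall (or set up) the concrete models for the hulls in question: for $n=1$ a repetitive aperiodic tiling of $\R$ has a hull that is an inverse limit of an Anderson--Putnam-type complex, and $\check H^1(\Omega_\T,\R)$ is a direct limit of the transition matrices acting on the finitely generated groups generated by the tiles; for a codimension-1 cut-and-project tiling with canonical window, the hull is (up to the usual mapping-torus/suspension description) built from an interval exchange or a circle rotation, and its top \v Cech cohomology is computed explicitly in the references \cite{FHK,KP,SadunBook}. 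In each case the top cohomology carries a natural ``frequency'' or ``integration'' map to $\R$, and the asymptotically negligible and well-balanced subspaces should be identifiable as explicit subspaces defined in terms of this structure.

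The core of the argument is then to show the two subspaces coincide (or at least that well-balanced $\subseteq$ asymptotically negligible) in these models. I expect ``asymptotically negligible'' to be the kernel of some map detecting the average (the image of the trace/frequency functional, or equivalently the kernel of pairing with the invariant measure), and ``well-balanced'' to be characterized by a growth estimate on the deviations of ergodic sums --- a Denjoy--Koksma or bounded-remainder-set phenomenon. In the $n=1$ case this is essentially the statement that a real PE function on the line whose integral has sublinear (indeed bounded-plus-boundary-term) fluctuations is the coboundary, in the weakly PE sense, of something bounded; one proves this by the standard telescoping/transfer-operator construction along the hierarchical (Bratteli) structure, choosing the transport so that the accumulated error stays bounded because the eigenvalue/boundary data has been assumed to match. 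In the codimension-1 cut-and-project case the same mechanism runs through the circle-rotation picture: a well-balanced class corresponds to a function on the transversal whose Birkhoff sums over the rotation have bounded oscillation, and on the circle (codimension 1!) such functions are exactly the ones that are measurably, hence weakly PE, cohomologous to a constant --- this is where the restriction to codimension~1 is essential, since it is precisely in one dimension that ``bounded remainder'' forces ``coboundary.''

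The main obstacle, I expect, is matching the abstract definitions of ``well-balanced'' and ``asymptotically negligible'' (which are phrased in terms of transport estimates via Theorem~\ref{Thm1}) with the concrete analytic conditions (bounded Birkhoff sums, being a coboundary) in the two models, and in particular handling the contribution of the boundary term $|U_1|-|U_{-1}|$ in the Hall/Laczkovich criterion: one must verify that the boundary-term allowance in the definition of well-balanced is exactly what is absorbed by the telescoping construction, with no leftover linear growth. A secondary technical point is the passage from the transversal/interval-exchange picture back to $\R^n$: one needs that a weakly PE transport on the line (or in the cut-and-project slab) built from a measurable coboundary on the transversal genuinely descends to a weakly PE transport of the original mass distributions, which is a suspension/flow-box argument that I would carry out carefully but expect to be routine. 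The final sentence of the theorem --- the equivalence of bounded and weakly PE transport --- then follows immediately by combining the established containment with Theorem~\ref{Thm1}(1) and (2), since the two cohomological conditions now coincide and both are equivalent to their respective transport statements.
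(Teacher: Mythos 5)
Your high-level plan (reduce to a purely cohomological containment and verify it by hand in each model) matches the paper's, but the mechanisms you propose for carrying it out have real gaps, and the paper's actual argument is different and more structural.

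For the cut-and-project case, the paper does not argue via Birkhoff sums of a circle rotation at all. It observes that for a canonical codimension-1 projection from $\R^{n+1}$ to $\R^n$ the hull is the torus $\R^{n+1}/\Z^{n+1}$ with a hyperplane split in two, so $\check H^k(\Omega_\T,\R)\cong H^k(T,\R)$ is the degree-$k$ part of the exterior algebra on $\check H^1$. By Kellendonk--Sadun, $\dim \check H^1_{AN}=m-n=1$, so one may take as a basis of $\check H^1$ the constant forms $dx^1,\dots,dx^n$ together with a single class $df$ with $f$ weakly PE. Then $\check H^n$ is spanned by the volume form $dx^1\wedge\cdots\wedge dx^n$ (visibly not WB) and the classes $df\wedge dx^J=d(f\,dx^J)$ (visibly WE), and the equality $\check H^n_{WB}=\check H^n_{AN}$ falls out by inspecting coordinates. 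Your route has two concrete problems here. First, for $n\ge 2$ the transversal dynamics is a higher-rank action, not a single rotation, and the relevant cohomological degree is $n$, not $1$: a bounded-Birkhoff-sum or Denjoy--Koksma statement about a degree-one cocycle does not by itself produce the weakly PE $(n-1)$-cochain $\beta$ with $\delta\beta=\alpha$ that the definition of AN demands. Second, the step ``measurably, hence weakly PE, cohomologous'' is a non sequitur; measurable coboundaries need not be continuous, let alone weakly PE, and bridging that gap is exactly the hard content of Gottschalk--Hedlund-type theorems. You would need to replace both steps by something like the exterior-algebra argument above (or an independent proof that every WB class in top degree lies in the ideal generated by $\check H^1_{AN}$).

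For the $n=1$ case the paper simply quotes the Gottschalk--Hedlund theorem (in the form proved for tiling spaces in Kellendonk--Sadun): on the minimal hull, a strongly PE $1$-cochain with uniformly bounded sums over all $1$-chains is the coboundary of a weakly PE function. Your proposed telescoping/transfer-operator construction along a Bratteli hierarchy is a substitution-flavored argument; to make the accumulated errors stay bounded you would need extra hypotheses (a substitution structure, or at least linear repetitivity), whereas the theorem is asserted for arbitrary repetitive FLC tilings of $\R$. Citing Gottschalk--Hedlund closes this cleanly and is what the paper does. Your final paragraph (deducing the transport statement from Theorem \ref{Thm1}(1)--(2) once $\check H^n_{WB}=\check H^n_{AN}$) is correct and is exactly how the paper concludes.
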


\medskip

In Section \ref{Background} we review the formalism of aperiodic
tilings, continuous hulls, \v Cech cohomology and PE cohomology, and
we precisely define what it means for a class in $\check
H^n(\Omega_\T, \R)$ to be well-balanced or asymptotically
negligible. In Section \ref{Pictures} we give a cohomological
interpretation of the problem of PE transport, and prove Theorem
\ref{Thm1}. We then use Theorem \ref{Thm1} to solve the Fibonacci and
chair examples. In Section \ref{AN=WB} we prove a slightly more
general version of Theorem \ref{Thm2}.

\section*{Acknowledgments}

We thank Jean-Marc Gambaudo and Francois Gautero for introducing us to
this problem several years ago, and for helpful discussions.  We also
congratulate J-M.G.~on the occasion of his 60th birthday. The work of
the second author is partially supported by NSF
  grant  DMS-1101326

\section{Background} \label{Background}

In this section we go over the general formalism of tilings and tiling
cohomology. For details, see (\cite{SadunBook}, Chapters 1 and 3). 

A {\em tile} is a topological ball in $\R^n$, equal to the closure of 
its interior, and equipped with a label. A {\em tiling} is a 
set of tiles whose union is all of $\R^n$, such that tiles only intersect on
their boundaries. If $t$ is a tile and $x \in \R^n$, then $t-x$ is a tile,
with the same label as $t$, obtained by translating all the points of $t$ by
$-x$. Two tiles $t_1$ and $t_2$ are {\em translationally equivalent} if
$t_2= t_1 - x$ for some $x \in \R^n$. The equivalence classes of this
relation are called {\em prototiles}. A {\em patch} is a finite set of 
tiles in a tiling, and $\R^n$ acts on patches by moving each tile separately.

If $K$ is a compact set
and $\T$ is a tiling, let $[K]_\T$ denote the patch of tiles in $\T$ that
intersects $K$. 
We assume that our tilings have {\em finite local complexity}, or FLC.
This means that for any $K$, the set $\{ [K-x]_\T | x \in \R^n\}$
is finite up to translation. This is equivalent to 
there are being finitely many prototiles, and finitely many
ways that two tiles can touch, up to translation. 

Let $B_R(x)$ denote the closed ball of radius $R$ around a point $x \in \R^n$.
If $\T_1$ and $\T_2$ are two FLC tilings, we say that $\T_2$ is {\em locally
derivable} from $\T_1$ if there exists a radius $R$ such that 
$[B_R(0)]_{\T_1-x} = [B_R(0)]_{\T_1-y}$, then $[B_1(0)]_{\T_2-x} = [B_1(0)]_{\T_2-y}$.
That is, the pattern of $\T_2$ in a ball of radius 1 
around $x$ (i.e. the pattern of $\T_2-x$ in a ball of radius 1 around the origin) is determined exactly
by the pattern of $\T_1$ is a ball of radius $R$ around $x$. If $\T_2$ is 
locally derivable from $\T_1$ and $\T_1$ is locally derivable from $\T_2$,
we say that $\T_1$ and $\T_2$ are {\em mutually locally derivable}, or MLD.

We can also speak of labeled point patterns being 
locally derivable from tilings,
or vice-versa. A discrete point pattern $X$ (with each point being 
assigned one of a finite set of labels) is locally derived from $\T$
if there exists an $R>0$ such that  
$[B_R(0)]_{\T-x} = [B_R(0)]_{\T-y}$, then $B_1(0) \cap {X-x} = B_1(0) \cap {X-y}$.
Conversely, $\T$ is locally derived from $X$ if there exists an $R>0$ such
that, whenever
$B_R(0) \cap {X-x} = B_R(0) \cap {X-y}$,
$[B_1(0)]_{\T-x} = [B_R(1)]_{\T-y}$. A point pattern $X_2$ is locally derived
from $X_1$  if there exists an $R>0$ such
that, whenever
$B_R(0) \cap {X_1-x} = B_R(0) \cap {X_1-y}$,
$B_1(0) \cap {X_2-x} = B_1(0) \cap {X_2-y}$. 

Given a tiling, the set of vertices of that tiling, with appropriate labels
for the vertices, is MLD to the original tiling. Given a point pattern, 
the set of Voronoi cells of that point pattern, with appropriate vertices,
is MLD to the point pattern. By combining these two operations, we see that
every FLC tiling is MLD to a tiling
by convex polytopes that meet full-face to full-face. 

Given a set of prototiles, there is a metric on the space of tilings
by those prototiles. Two FLC tilings $\T_1$, $\T_2$ are $\epsilon$-close
if there exist $x_1, x_2 \in \R^n$ with $|x_i| < \epsilon$, such that
$[B_{1/\epsilon}(0)]_{\T_1-x_1} = [B_{1/\epsilon}(0)]_{\T_2-x_2}$. That is, two tilings 
are close if they agree on a large ball up to a small translation. The 
{\em continuous hull} $\Omega_\T$ of a tiling $\T$ is the completion of the
translational orbit $\{ \T-x\}$. A tiling $\T'$ is in $\Omega_\T$ if and
only if every patch of $\T'$ is a translate of a patch of $\T$. 

An FLC tiling $\T$ is {\em repetitive} if for every patch $P$ of $\T$ there
exists a radius $R$ such that every ball of radius $R$ in $\T$ contains at
least one occurrence of $P$. An FLC tiling $\T$ is repetitive if and only if
the hull $\Omega_\T$ is a minimal dynamical system, i.e. if all tilings
in $\Omega_\T$ exhibit the same set of patches (up to translation). 

A tiling $\T$ is {\em aperiodic} if $\T-x = \T$ implies $x=0$. If $\T$
is aperiodic and repetitive, then a neighborhood of $\T$ in $\Omega_\T$ 
is homeomorphic to the product of a Cantor set and an open subset of $\R^n$.

{\bf We henceforth assume that all tilings are FLC, aperiodic and
repetitive, with tiles that are convex polytopes that meet full-face to 
full-face.}  

Given such a tiling $\T$, a continuous function $f: \R^n \to \R$ is said
to be {\em strongly pattern equivariant} (PE) with respect to $\T$ if 
there exists a radius $R$ such that, whenever $[B_R(0)]_{\T-x}=
[B_R(0)]_{\T-y}$, $f(x)=f(y)$. That is, $f$ is PE with radius $R$ if the
value of $f(x)$ is determined exactly 
by the pattern of $\T$ in a ball of radius 
$R$ around $x$. A {\em weakly PE} function is the uniform limit of 
strongly PE functions of arbitrary radius. A function $f$ is weakly PE
if for every $\epsilon > 0 $ there exists a radius $R_\epsilon$ such that 
the value of $f(x)$ is determined to within $\epsilon$ by the pattern of 
$\T$ in a ball of radius $R_\epsilon$ around $x$. 

The tiling $\T$ gives a decomposition of $\R^n$ into 0-cells (vertices),
1-cells (edges), 2-cells (faces), etc. A $k$-cochain is an assignment of 
a real number to each $k$-cell. As with functions, $k$-cochains can be 
weakly or strongly PE. Let $\Omega^k(\T)$ be the set of strongly PE 
$k$-cochains. The coboundary of a strongly PE cochain is strongly
PE (albeit possibly with a slightly larger radius), yielding a complex  
\be 0 \rightarrow \Omega^0(\T) \xrightarrow{\delta_0} \Omega^1(\T) 
\xrightarrow{\delta_1} \cdots \xrightarrow{\delta_{n-1}} \Omega^n(\T) 
\rightarrow 0.
\ee

Another version of pattern equivariant cohomology (in fact, the
original version proposed in \cite{KP}) uses differential forms.  We
say that a $k$-form $\alpha$ on $\R^n$ is strongly PE if it can be
written as a sum $\sum_I \alpha_I(x) dx^I$ and each coefficient
function $\alpha_I$ is strongly PE.  We say that $\alpha$ is weakly PE
if each $\alpha_I$, and all derivatives of $\alpha_I$ of all orders,
are weakly PE functions. The exterior derivative of a strongly PE form
is strongly PE, so we can consider the complex of strongly PE forms.
The key fact, due to \cite{KP} for forms and to \cite{SadunInteger} for
cochains, is:

\begin{thm}\label{Cech} The cohomology of the de-Rham-like complex of  
  strongly PE forms on $\T$, and the cohomology of the complex of
  real-valued PE cochains, are both canonically isomorphic to the
  real-valued \v Cech cohomology of the continuous hull $\Omega_\T$.
\end{thm}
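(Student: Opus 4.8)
Both isomorphisms come from realizing $\Omega_\T$ as an inverse limit of finite-dimensional approximants and comparing the relevant complexes level by level; I sketch this following the inverse-limit constructions of Anderson--Putnam \cite{AP} and G\"ahler, and the pattern-equivariant comparisons of \cite{KP} (for forms) and \cite{SadunInteger} (for cochains). First I would recall the collared approximants: for each integer $j\ge 0$ one builds a finite CW complex $\Gamma_j$ (a ``branched $n$-manifold'') with one $n$-cell for each prototile decorated by the combinatorial type of its $j$th corona, gluing cells along faces exactly as the corresponding collared tiles meet somewhere in $\T$. There are cellular bonding maps $\rho_j\colon\Gamma_{j+1}\to\Gamma_j$ forgetting the outermost ring of collar, and continuous maps $\pi_j\colon\Omega_\T\to\Gamma_j$ recording, for a tiling $\T'$, the position of the origin inside its $j$-collared tile. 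FLC makes each $\Gamma_j$ finite, while repetitivity and aperiodicity make $(\pi_j)\colon\Omega_\T\to\ilim(\Gamma_j,\rho_j)$ a homeomorphism; since any two sensible collaring schemes are mutually cofinal, the resulting pro-object---and hence the final isomorphism---is canonical.

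For the cochain statement: each $\Gamma_j$ is a finite CW complex assembled from convex polytopes meeting full-face to full-face, so $\check H^\ast(\Gamma_j;\R)=H^\ast(\Gamma_j;\R)$ is computed by the cellular cochain complex $C^\ast(\Gamma_j)$. Via $x\mapsto\pi_j(\T-x)$ the cellular decomposition of $\R^n$ coming from $\T$ is the pullback of that of $\Gamma_j$, so pulling back a cellular cochain from $\Gamma_j$ yields a strongly PE cochain on $\T$ of radius comparable to $j$; this gives inclusions $C^\ast(\Gamma_j)\hookrightarrow\Omega^\ast(\T)$ compatible with the maps $\rho_j^\ast$ and with the coboundaries, and every strongly PE cochain of radius $R$ lies in the image once $j$ is large enough relative to $R$. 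Hence $\Omega^\ast(\T)=\varinjlim_j C^\ast(\Gamma_j)$ as cochain complexes. Since cohomology commutes with direct limits, $H^\ast(\Omega^\ast(\T))=\varinjlim_j H^\ast(\Gamma_j;\R)$, and by continuity of \v Cech cohomology under inverse limits of compact Hausdorff spaces this equals $\check H^\ast(\ilim\Gamma_j;\R)=\check H^\ast(\Omega_\T;\R)$.

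For the form statement I would give each $\Gamma_j$ its evident smooth structure---a smooth form being a tuple of smooth forms on the closed top cells that agree on the shared faces---and establish a de Rham theorem on these branched manifolds, either by adapting the \v Cech--de Rham and partition-of-unity argument as in \cite{KP}, or by integration over cells, which on $\Gamma_j$ is the classical de Rham comparison map. Strongly PE $k$-forms of radius $R$ on $\T$ are exactly the $\pi_j$-pullbacks of smooth $k$-forms on $\Gamma_j$ for $j$ large, so the strongly PE de Rham complex equals $\varinjlim_j\Omega^\ast_{\mathrm{dR}}(\Gamma_j)$; taking cohomology and once more exchanging with $\varinjlim$ gives $\varinjlim_j H^\ast_{\mathrm{dR}}(\Gamma_j)=\varinjlim_j H^\ast(\Gamma_j;\R)=\check H^\ast(\Omega_\T;\R)$, while integration of PE forms over PE cells implements the compatibility with the cochain model.

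The step I expect to require the most care is the de Rham theorem on the approximants $\Gamma_j$, since these are genuine singular spaces, branched along their lower-dimensional cells: one must check that ``smooth form on $\Gamma_j$'' is exactly the right notion---that the Poincar\'e lemma and the partition-of-unity machinery survive the branching---and, in parallel, that strongly PE forms on $\T$ correspond to smooth forms on $\Gamma_j$ on the nose rather than to some slightly weaker class. The remaining ingredients---exactness of filtered direct limits and continuity of \v Cech cohomology along inverse limits of compact Hausdorff spaces---are standard, and the independence of everything from the choice of collaring scheme is what justifies the word ``canonically''.
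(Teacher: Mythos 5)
The paper does not actually prove Theorem \ref{Cech}; it quotes it as known, citing \cite{KP} for forms and \cite{SadunInteger} for cochains, so there is no in-paper proof to compare against. Your sketch is a correct outline of the standard argument from that literature: for the cochain statement, the identification of the strongly PE complex with $\varinjlim_j C^\ast(\Gamma_j)$ over the collared G\"ahler/Anderson--Putnam approximants, together with exactness of filtered colimits and continuity of \v Cech cohomology along inverse limits of compact Hausdorff spaces, is precisely the route of \cite{SadunInteger}. For the form statement, note that \cite{KP} argues directly on $\R^n$ with pattern-equivariant partitions of unity and a PE \v Cech--de Rham comparison, which sidesteps the one genuinely delicate point you correctly flag --- formulating ``smooth form'' on the branched approximants $\Gamma_j$ and proving a de Rham theorem there; your inverse-limit route for forms is viable but that branched de Rham comparison (or, alternatively, a PE smoothing/Poincar\'e lemma relating forms to cochains by integration over cells) is the piece you would need to write out in full rather than defer.
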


Note that both versions of PE cohomology depend only on the tiling
space $\Omega_\T$ and not on the particular tiling $\T$ that is used
for the calculations, and both are invariant under homeomorphisms of
$\Omega_\T$.  Thanks to this fact, we can use the symbol $\check
H^k(\Omega_\T, \R)$ to refer the real-valued PE cohomology of $\T$
(using either forms or cochains), as well as to the \v Cech cohomology
of $\Omega_\T$.  Moreover, the isomorphism of form-based and
cochain-based cohomology is easy to construct. Every class in the
form-based cohomology is represented by a closed PE $k$-form, which
can be integrated over $k$-cells to give a closed PE cochain, which
represents a class in the cochain-based cohomology.  In particular, if
we have a smooth strongly PE mass density $f$, then we have a strongly
PE $n$-form $f d^nx$.  Integrating this form over tiles gives a
strongly PE $n$-cochain.

Suppose that $\alpha$ is a strongly PE $n$-cochain. For dimensional
reasons, $\delta \alpha =0$, so $\alpha$ represents a cohomology class
in $\check H^n(\Omega_\T, \R)$. We call such a cochain {\em well
  balanced} if there exists constants $c_1$ and $c_2$ such that, for
every patch $P$,
\begin{equation}
\left | \int_P \alpha \right | \le c_1 + c_2 |\partial P|,
\end{equation}
where $|\partial P|$ is the $(n-1)$-dimensional Lebesgue measure of
the boundary of $P$, and where $\int_P \alpha$ denotes the value of
$\alpha$ applied to the chain $P$. If $\alpha = \delta \beta$ for some
strongly PE (and therefore bounded) $(n-1)$-cochain $\beta$, then this
condition is always met, since
\begin{equation} \int_P \alpha = \int_{\partial P}  \beta. \end{equation}
The well-balanced condition therefore only
depends on the cohomology class of $\alpha$, and we define $\check
H^n_{WB}(\Omega_\T, \R)$ to be the classes in $\check H^n(\Omega_\T,
\R)$ represented by well-balanced cochains.

If $\alpha$ is a strongly PE $n$-cochain and there exists a {\em
  weakly} PE $(n-1)$-cochain $\beta$ such that $\alpha = \delta
\beta$, then we say that $\alpha$ is {\em weakly exact}.  As before,
this only depends on the cohomology class of $\alpha$, since if
$\alpha' = \alpha + \delta \gamma$, where $\gamma$ is a strongly PE
cochain, and if $\alpha = \delta \beta$, where $\beta$ is weakly PE,
then $\alpha' = \delta (\gamma + \beta)$, where $\gamma + \beta$ is
weakly PE. The cohomology class of a weakly exact cochain is said to
be {\em asymptotically negligible}.  We denote the asymptotically
negligible classes in $H^n(\Omega_T, \R)$ by $\check
H^n_{AN}(\Omega_\T, \R)$.

\begin{prop}$\check H^n_{AN}(\Omega_\T, \R) \subset \check 
H^n_{WB}(\Omega_\T, \R)$.
\end{prop}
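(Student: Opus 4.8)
The plan is to show that any asymptotically negligible class has a representative satisfying the well-balanced estimate, by exploiting the weakly PE primitive together with a finite-range approximation. Let $\alpha$ be a strongly PE $n$-cochain representing an asymptotically negligible class, so $\alpha = \delta\beta$ for some weakly PE $(n-1)$-cochain $\beta$. For a patch $P$ we have $\int_P \alpha = \int_{\partial P} \beta$, so we need to control $\int_{\partial P}\beta$ in terms of $|\partial P|$. A weakly PE cochain is bounded (it is a uniform limit of strongly PE cochains, each of which takes finitely many values), say $|\beta| \le M$ on every $(n-1)$-cell. Since the tiling has FLC and tiles are convex polytopes meeting full-face to full-face, there is a uniform bound on the $(n-1)$-volume of each $(n-1)$-cell from below and a uniform bound on the number of $(n-1)$-cells meeting any ball of given radius; hence the number of $(n-1)$-cells comprising $\partial P$ is bounded by $C|\partial P|$ for a constant $C$ depending only on the tiling. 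Therefore
\begin{equation}
\left|\int_P \alpha\right| = \left|\int_{\partial P}\beta\right| \le M \cdot \#\{(n-1)\text{-cells in }\partial P\} \le MC\,|\partial P|,
\end{equation}
which is the well-balanced bound with $c_1 = 0$ and $c_2 = MC$. Thus $\alpha$ is well-balanced, and since well-balancedness depends only on the cohomology class, the given class lies in $\check H^n_{WB}(\Omega_\T,\R)$.

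The only place requiring genuine care is the claim that the number of $(n-1)$-cells in $\partial P$ is $O(|\partial P|)$; this is where I expect the main (though modest) obstacle to lie. One must observe that, under our standing assumptions (convex polytope tiles, full-face to full-face, FLC), every $(n-1)$-cell of the tiling is a full face shared by exactly two tiles, and there are finitely many prototiles up to translation, hence finitely many congruence types of $(n-1)$-faces; each such face has $(n-1)$-volume bounded below by some $v_0 > 0$. The boundary $\partial P$ of a patch $P$ is a union of such faces (the faces of tiles in $P$ that are not shared with another tile of $P$), and these faces are essentially disjoint in the $(n-1)$-dimensional sense, so their number is at most $|\partial P|/v_0$. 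One subtlety: $\partial P$ as a set of points could in principle include lower-dimensional pieces, but for the integral $\int_{\partial P}\beta$ only the top-dimensional $(n-1)$-cells contribute, so this does not affect the estimate. In dimension $n=1$ the statement is even simpler: $\partial P$ consists of exactly two vertices (assuming $P$ is an interval; for a general finite patch one gets a bounded multiple of the number of connected components), and $|\int_P\alpha| = |\beta(\text{right endpoint}) - \beta(\text{left endpoint})| \le 2M$, giving the bound with $c_2 = 0$.

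I would present the argument in exactly this order: first recall that weakly PE cochains are uniformly bounded; second, invoke Stokes/coboundary to reduce $\int_P \alpha$ to $\int_{\partial P}\beta$; third, bound the number of boundary cells linearly in $|\partial P|$ using FLC and the lower volume bound on faces; and conclude. No new machinery beyond what is in the excerpt is needed — the inclusion is really a soft consequence of "weakly exact $\Rightarrow$ primitive is bounded" plus the geometric regularity built into our standing assumptions on the tiling.
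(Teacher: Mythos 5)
Your proposal is correct and follows exactly the same route as the paper's proof: weak pattern equivariance implies $\beta$ is a uniform limit of (bounded) strongly PE cochains and hence bounded, and then Stokes gives $\left|\int_P\alpha\right|=\left|\int_{\partial P}\beta\right|\le C|\partial P|$. The only cosmetic difference is that the paper states the boundedness of $\beta$ in the form $|\beta(z)|\le C|z|$ per cell and sums directly, whereas you bound $|\beta|$ by a constant per cell and separately count the boundary faces via the FLC lower bound on face volumes; these are interchangeable under the standing assumptions.
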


\begin{proof} 
  Suppose that $\alpha=
  \delta \beta$ with $\beta$ weakly PE. Since $\beta$ is the uniform
  limit of strongly PE cochains, and since strongly PE cochains are
  bounded, $\beta$ is bounded. That is, there exists a constant $C$
  such that, for every $(n-1)$-cell $z$ in $\T$, $|\beta(z)| \le C
  |z|$. But then, for every patch $P$,
$$ \left | \int_P \alpha \right | = \left | \int_{\partial P} \beta \right | \le C | \partial P|,$$
so $\alpha$ is well-balanced.
\end{proof}

\medskip

We complete this section by considering what it means for bounded transport 
to be strongly or weakly PE. The situation is slightly different, depending
on whether we consider points, masses concentrated at points,  
or countinuous distributions of mass. 

There is no such thing as weakly PE transport of points.  If $X_1$ and
$X_2$ are discrete point patterns, each locally derived from $\T$,
then a bounded transport $b: X_1 \to X_2$ is {\em strongly} PE if
there exists an $R>0$ such that, whenever $x,y \in X_1$ and $B_R(0)
\cap (X_1-x) = B_R(0) \cap (X_1-y)$, then $b(x)-x = b(y)-y$. That is,
the displacement $b(x)-x$ is determined exactly by the pattern of
$X_1$ in a ball around $x$, which in turn is determined exactly by the
pattern of $\T$ in a ball (of possibly bigger radius $R'$) around
$x$. However, since $X_2$ has FLC (being locally derived from the FLC
tiling $\T$), it is impossible to approximate one bounded transport by
another, so it does not make sense to speak of the bounded transport
$b$ being weakly PE.

However, there is a more general notion of mass transfer among point
masses where weak pattern equivariance does make sense.  Suppose that
$b: X_1 \times X_2 \to \R$, and that for $x_1 \in X_1$ and $x_2 \in
X_2$, $b(x_1, x_2)$ represents the amount of mass transported from
$x_1$ to $x_2$. This needn't be an integer. All that is required is
that $0 \le b(x_1, x_2) \le 1$, that for fixed $x_1$ we have
$\sum_{x_2 \in X_2} b(x_1, x_2) = 1$, and that for fixed $x_2 \in X_2$
we have $\sum_{x_1 \in X_1} b(x_1, x_2) = 1$.  The transport $b$ is
{\em bounded} if there is a constant $R$ such that $b(x_1, x_2) = 0$
whenever $|x_1-x_2|>R$. The bounded transport $b$ is strongly PE if
there is a constant $R'$ such that $b(x_1, x_2)$ is determined exactly
by $B_{R'}(0) \cap (X_1-x_1)$, and $b$ if weakly PE is $b$ is the
uniform limit of strongly PE transport functions.

We also consider mass transport from one density $f_1$ to another
density $f_2$. A transport function is a function $b: \R^n \times \R^n
\to \R$, where $b(x,y)$ is a {\em density} of mass transported from
$x$ to $y$.  More precisely, if $A, B \subset \R^n$ are compact sets,
then $\iint_{A \times B} b(x,y) d^nx \, d^n y$ is the amount of mass
transported from $A$ to $B$. As with mass distributions localized at
points, $b$ is bounded if there is a constant $R$ such that $b(x,y)=0$
whenever $|x-y| > R$. The transport $b$ is strongly PE if $b(x,y)$ is
determined exactly by the pattern of $\T$ is a ball of radius $R'$
around $x$ (equivalently, by $[B_{R'}(0)]_{\T-x}$). This can be
expressed in an integral form.  Let $b_{A,B}(x,y) = \iint_{(x+A)\times
  (y+B)} b(x', y') d^n x' \, d^n y'$. This is the amount of mass
transported from $x+A$ to $y+B$. $b$ is a strongly PE transport
density if and only if, for arbitrary $A$ and $B$, $b_{A,B}$ is a
strongly PE function.  We say that $b$ is a weakly PE transport
density if and only if, for arbitrary $A$ and $B$, $b_{A,B}$ is a
weakly PE function.

\section{The Cohomological Picture}\label{Pictures}

Suppose that $X_1$ and $X_2$ are discrete point patterns, each locally
derived from a non-periodic, repetitive FLC tiling $\T$.  Note that
$X_1$ and $X_2$ are Delone sets.  Let $R$ be a distance such that
every ball of radius $R$ contains at least one point from each of the
two collections. Let $\rho$ be a bump function whose support is a ball
of radius $R$. Let $f_i(x) = \sum_{x' \in X_i} \rho(x-x')$.  This can
be viewed as the convolution of $\rho$ with a Dirac comb supported on
$X_i$.

\begin{thm} \label{Thm-point} If there is a bounded transport from
  $X_1$ to $X_2$ in the sense of point {\em masses}, then there is a
  bounded transport from $f_1$ to $f_2$ in the sense of density
  functions. Furthermore, if there exists a transport from $X_1$ to
  $X_2$ that is (strongly or weakly) PE, then then there exists a
  similarly PE transport from $f_1$ to $f_2$.
\end{thm}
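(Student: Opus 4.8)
The plan is to smear the discrete transport through the very bump function $\rho$ used to build $f_1$ and $f_2$. After normalizing $\rho$ so that $\int_{\R^n}\rho = 1$, given a point-mass transport $b : X_1 \times X_2 \to \R$ that vanishes when $|x_1 - x_2| > R_0$, I would define
\[
B(x,y) \;=\; \sum_{x_1 \in X_1}\sum_{x_2 \in X_2} b(x_1, x_2)\,\rho(x - x_1)\,\rho(y - x_2).
\]
For $(x,y)$ in any bounded region this is a finite sum: only the $x_1$ with $|x - x_1|\le R$ contribute, and for each such $x_1$ only the finitely many $x_2$ with $b(x_1,x_2)\ne 0$. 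Hence $B$ is continuous and nonnegative, so it is a bona fide transport density.

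First I would verify that $B$ is a bounded transport from $f_1$ to $f_2$. Interchanging the locally finite sum with the integral and using $\int\rho = 1$ and $\sum_{x_2} b(x_1,x_2) = 1$ gives $\int_{\R^n} B(x,y)\,dy = \sum_{x_1\in X_1}\rho(x-x_1) = f_1(x)$, and symmetrically $\int_{\R^n} B(x,y)\,dx = f_2(y)$ using $\sum_{x_1} b(x_1,x_2) = 1$. If $B(x,y)\ne 0$ then some surviving pair has $|x-x_1|\le R$, $|y-x_2|\le R$ and $|x_1-x_2|\le R_0$, so $|x-y|\le 2R + R_0$; thus $B$ is bounded with displacement constant $2R+R_0$. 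This settles the first assertion.

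For the pattern-equivariance statements, the strategy is to show that $B$ is itself a (strongly, resp.\ weakly) PE function of $(x,y)$, and then to invoke the elementary fact that the windowed averages $B_{A,B}$ inherit this, since integrating a strongly PE function over $(x+A)\times(y+B)$ yields a strongly PE function (enlarging the radius by $\sup_{u\in A}|u|$), and likewise in the weak case; then $B$ is automatically a (strongly, resp.\ weakly) PE transport density. When $b$ is strongly PE, say $b(x_1,x_2)$ is determined by $[B_{R'}(0)]_{\T-x_1}$, the value $B(x,y)$ is determined by: the list of $x_1\in X_1$ with $|x-x_1|\le R$; the list of $x_2\in X_2$ with $|y-x_2|\le R$ (all within $2R+R_0$ of $x$ whenever a term survives); and the numbers $b(x_1,x_2)$ for the surviving pairs. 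Since $X_1,X_2$ are locally derived from $\T$, all of this is read off from $[B_{R''}(0)]_{\T-x}$ once $R''$ absorbs the local-derivability radii, the constants $R$ and $R_0$, and the shift of base point from $x_1$ to $x$; so $B$ is strongly PE. When $b$ is weakly PE, write $b = \lim_k b^{(k)}$ uniformly with each $b^{(k)}$ strongly PE, form the corresponding $B^{(k)}$ (strongly PE by the argument just given), and observe that at each $(x,y)$ the difference $B^{(k)}(x,y)-B(x,y)$ has at most $N_1 N_2$ nonzero terms --- where $N_i$ is the maximal number of points of the Delone set $X_i$ in a ball of radius $R$ --- each bounded by $\|\rho\|_\infty^2\,\|b^{(k)}-b\|_\infty$; hence $B^{(k)}\to B$ uniformly and $B$ is weakly PE.

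I expect the routine construction and the marginal/boundedness bookkeeping to go through without difficulty; the one place that needs genuine care is the chain of radius estimates establishing that $B(x,y)$ is determined by $[B_{R''}(0)]_{\T-x}$, where the local-derivability radii of $X_1$ and $X_2$ over $\T$ must be composed with the transport radius $R_0$ and the bump radius $R$. In the weak case the subtle point is that the number of surviving terms at $(x,y)$ is bounded \emph{uniformly} in $(x,y)$ --- precisely the Delone property --- so that uniform convergence of $b^{(k)}$ upgrades to uniform convergence of $B^{(k)}$.
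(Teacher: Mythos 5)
Your construction is exactly the one the paper uses: smear the discrete transport $b_0$ with the bump function $\rho$ in both variables, check the marginals and the $2R+R_0$ displacement bound, and transfer pattern equivariance through the local derivability of $X_1, X_2$ from $\T$. The proposal is correct and follows the paper's proof, merely supplying more detail on the radius bookkeeping and the uniform-convergence step in the weakly PE case.
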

\begin{proof} The strategy for getting a transport from $f_1$ to $f_2$
  is to first move mass by up to $R$ to convert $f_1$ to a collection
  of unit point masses at $X_1$, then to do the bounded transport from
  $X_1$ to $X_2$, and then to move mass by up to $R$ to get $f_2$.
 
  Specifically, if $b_0: X_1 \times X_2 \to \R$ describes the
  transport from $X_1$ to $X_2$, then for $x,y \in \R^n$ we define
\begin{equation}
b(x,y) = \sum_{x' \in X_1} \sum_{y' \in X_2} \rho(x-x') \rho(y-y') b_0(x', y').
\end{equation}
This sum is finite, since it only involves points $x'$ within a distance $R$ of $x$ and points $y'$ within a distance $r$ of $y$. 
If $b_0(x',y')=0$ whenever $|x'-y'| > R'$, then $b(x,y)=0$ whenever $|x-y| > 2R+R'$. Note that 
\begin{eqnarray} 
\int_{\R^n} b(x,y) d^n y &=& \sum_{x' \in X_1} \sum_{y' \in X_2} \rho(x-x') b_0(x',y') \cr 
& = &  \sum_{x' \in X_1} \rho(x-x') \cr 
&=& f_1(x),
\end{eqnarray}
and similarly $\int b(x,y) d^nx = f_2(y)$. If $b_0$ is strongly PE with radius $R''$, then $b$ is strongly PE with 
radius $2R + R''$. If $b_0$ is weakly PE, then $b$ is weakly PE.  
\end{proof}

The reverse implication is less immediate, since there isn't a canonical 
way to convert a continuous distribution into a point pattern. We'll return
to this question later. 

We now relate transport of continuous distributions to cohomology. If
$f_1$ and $f_2$ are strongly PE mass densities, let $\alpha_i$ be a
(strongly PE) cochain whose value on a tile is the integral of $f_i
d^nx$ over that tile. Let $[f_1]$ and $[f_2]$ be the cohomology
classes of $\alpha_1$ and $\alpha_2$, respectively.

\begin{thm}\label{two-three}
 There is a bounded transport from $f_1$ to $f_2$ 
if and only if there exists a bounded $(n-1)$-cochain $\beta$ such that
$\delta \beta = \alpha_1-\alpha_2$. Furthermore, the transport 
from $f_1$ to $f_2$ can be chosen to be (strongly or weakly)
PE if and only if $\beta$ can be chosen to be (strongly or weakly) PE. 
\end{thm}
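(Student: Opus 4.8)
The plan is to prove the two implications of the first sentence by explicit constructions, arranged so that each transfers (strong or weak) pattern equivariance from one side to the other; the PE refinements then come for free.

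\emph{Transport $\Rightarrow$ cochain.} Given a bounded transport $b$ vanishing for $|x-y|>R$, coorient each $(n-1)$-cell $e$, let $H_e$ be its affine hull, and let $\beta(e)$ be the net mass crossing $e$: the integral of $b(x,y)\,d^nx\,d^ny$ over pairs whose straight segment meets $H_e$ in a point of $e$, counted with the sign of the crossing. Both endpoints of such a pair lie within $R$ of $e$, so this is finite; and $|\beta(e)|$ is at most the $f_1$-mass of an $R$-neighbourhood of $e$, hence $\le c_2|e|$ with $c_2$ uniform by FLC, so $\beta$ is bounded. Since each tile $t$ is convex, a segment from inside $t$ to outside crosses $\partial t$ once and one between two exterior points crosses it evenly often, so $(\delta\beta)(t)$ is the net outward flux through $\partial t$, which is (mass $b$ sends out of $t$) $-$ (mass $b$ sends into $t$) $=\alpha_1(t)-\alpha_2(t)$. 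Finally each $\beta(e)$ is a finite combination of values of the averaged transports $b_{A,B}$ of Section~\ref{Background} at points determined by $e$ (finitely many such functions by FLC, plus corrections for segments crossing a neighbouring cell of $H_e$), and depends only on $b$ near $e$, so $\beta$ is strongly PE, resp. weakly PE, when $b$ is.

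\emph{Cochain $\Rightarrow$ bounded transport.} From $\delta\beta=\alpha_1-\alpha_2$, $|\beta(e)|\le c_2|e|$ we get $\bigl|\int_P f_1-\int_P f_2\bigr|=|\beta(\partial P)|\le c_2|\partial P|$ for every patch $P$. Also $f_1,f_2$ --- continuous, non-negative, strongly PE, and (non-trivially) not $\equiv0$ on the repetitive $\T$ --- have strictly positive lower density, since a fixed ball on which $f_i$ is bounded below recurs, by repetitivity, inside every ball of a fixed radius. These two facts verify, for a large enough $r$, the hypotheses of the continuous Hall Marriage Theorem~\ref{Hall1}: for compact $U$ take $P=$ the tiles inside $U_{r-D}$, so $\int_U f_2\le\int_{\bigcup P}f_2\le\int_{U_{r-D}}f_1+c_2|\partial P|$, and the error $c_2|\partial P|$ --- bounded by the volume of a bounded-width collar about $\partial U$ --- is exceeded by the surplus $\int_{U_r}f_1-\int_{U_{r-D}}f_1$ once $r$ is large (after enlarging $r$ it suffices to treat $U$ a union of unit cubes, where this is routine). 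The symmetric inequality is the same, so Hall gives a bounded transport.

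\emph{Cochain $\Rightarrow$ PE transport.} Now the transport must be as equivariant as $\beta$, so one needs a direct construction. Redistribute mass within each tile (a bounded, strongly PE move) so $f_i$ becomes $\alpha_i(t)/|t|$ on each tile $t$; then attempt to realize $\beta$ tile-to-tile --- across each face $e$ of tiles $t^\pm$ (oriented so $\beta(e)\ge0$ runs $t^-\!\to t^+$) move $\beta(e)$ units from $t^-$, in proportion to $f_1$, into $t^+$, in proportion to $f_2$, all faces at once. Displacements are $\le$ twice a tile diameter, the recipe is manifestly as PE as $\beta$, and the net change in a tile $t$ is $-\sum_{e\ni t}\beta_{\mathrm{out}}(e)=-(\delta\beta)(t)=\alpha_2(t)-\alpha_1(t)$, so the output is the tile-averaged $f_2$. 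The obstacle --- which I expect to be the crux of the whole theorem --- is that a tile need not hold enough mass to fund its outgoing flux $\sum_{e\ni t,\,\beta_{\mathrm{out}}(e)>0}\beta(e)$, only bounded by $c_2|\partial t|$, which may exceed $\int_t f_1$. One repairs this with the positive lower density, by running the construction on a pattern-equivariant cell structure at a large scale $S$ (replacing $\beta$ by the cochain of net $\beta$-flux across the coarse faces, still with coboundary $\alpha_1-\alpha_2$ and still as PE as $\beta$): a coarse cell then carries $f_i$-mass $\gtrsim S^n$ against boundary area $\lesssim S^{n-1}$, killing the deficit for $S$ large. Constructing such coarsenings at every scale, and treating cells that meet many faces, is where the real work sits. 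Either way, the forward construction makes $\beta$ exactly as equivariant as $b$ and the backward ones make the transport exactly as equivariant as $\beta$, giving all three equivalences.
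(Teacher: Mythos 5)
Your forward direction (transport $\Rightarrow$ cochain, with $\beta(e)$ the signed net mass crossing the face $e$, bounded by the mass of an $R$-neighborhood) is exactly the paper's argument, and your routing of the bounded-only converse through a continuous Hall Marriage Theorem is a legitimate alternative --- it is in effect the argument the paper itself uses for statement (1) of Theorem \ref{Thm1}, imported into Theorem \ref{two-three}; the paper instead gets all three converses at once from a single direct construction. So the structure is sound. The problem is in the one place you yourself flag as the crux.

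In the converse for PE transport you correctly identify the obstruction --- a tile's mass is only bounded below by some $\epsilon>0$ while its outgoing flux is only bounded above by $N_1N_2$ (number of faces times $\sup|\beta|$), so the one-shot face-by-face transfer may be unfundable --- but you do not resolve it. Your proposed fix (a pattern-equivariant coarsening at scale $S$, with a coarse flux cochain still satisfying $\delta\beta_{\mathrm{coarse}}=\alpha_1-\alpha_2$) is plausible in outline but you explicitly defer ``the real work'': constructing PE coarse cell structures, pushing $\beta$ to the coarse complex when coarse faces need not be unions of fine faces, and controlling cells with many faces. As written this is a genuine gap, and it is also far more machinery than is needed. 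The paper's device is much simpler and stays at the scale of single tiles: run the \emph{same} transfer in $N_3=\lceil N_1N_2/\epsilon\rceil$ identical installments, moving $-\beta(c)/N_3$ across each face $c$ at each step, so that the total outflow from any tile in one step is less than $N_1\cdot\epsilon/N_1=\epsilon$. The point that makes this work is that after $j$ steps the mass in tile $t$ is the convex combination $(1-j/N_3)\,\alpha_1(t)+(j/N_3)\,\alpha_2(t)\ge\epsilon$, so every step is fundable; since $N_3$ is a fixed constant, total displacements are at most $2N_3$ tile diameters, and each step is manifestly exactly as (strongly or weakly) PE as $\beta$. Replacing your coarsening sketch with this finite-step averaging argument closes the gap.
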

 
\begin{proof}
If there is a transport from $f_1$ to $f_2$, let $\beta$ on
a tile face be the minus the 
net mass transported across that face (say along a 
straight line).  $\delta \beta$ applied to a tile is minus 
the sum of the transfers
on all of its faces, i.e. minus the change in mass, so $\delta \beta
= \alpha_1-\alpha_2$. 
If the transport is bounded by a distance $R$, then the
mass transported across a face is bounded by the total existing mass within
$R$ of that face, so $\beta$ is bounded. Likewise, if the transport is
pattern-equivariant, then so is $\beta$. 

For the converse, we assign a point $p(t)$ to each tile $t$ (in a
strongly PE way) and do a bounded transport to put all the mass
$\alpha_1(t)$ of the tile $t$ at that point. Given a bounded cochain
$\beta$ with $\delta \beta = \alpha_1 - \alpha_2$, we will construct a
bounded transport that converts this collection of point masses into a
collection of point masses of size $\alpha_2(t)$ at each $p(t)$. By
Theorem \ref{Thm-point} (or more precisely, by the proof of the
theorem adapted to the case where the points do not have unit mass),
this implies the existence of a bounded transport from $f_1$ to $f_2$.

Since each $f_i$ is positive, there is a lower bound $\epsilon>0$ to
the value of $\alpha_i$ on any tile. There is also an upper bound
$N_1$ to the number of faces a tile can have.  Let $N_2$ be an upper
bound to the values of $|\beta(c)|$, and let $N_3 = \lceil MN/\epsilon
\rceil$, so that $|\beta(c)|/N_3 < \epsilon/N_1$.  We do our mass
transport in $N_3$ steps, in each step transferring mass
$-\beta(c)/N_3$ across the face $c$.  At every step, the mass in each
tile $t$ is a weighted average of $\alpha_1(t)$ and $\alpha_2(t)$, and
so is at least $\epsilon$, so there is enough mass in each tile to do
the transfer.  At each step, each piece of mass is moved a distance at
most twice the diameter of the largest tile, so the $N_3$-step
transfer process is bounded.

If $\beta$ is (weakly or strongly) PE, then the transfer at each step
from point masses of size $\alpha_1(t)$ to point masses of size
$\alpha_2(t)$ is (weakly or strongly) PE, making the entire process
(weakly of strongly) PE.
\end{proof}

\bigskip

\begin{proof}[Proof of Theorem \ref{Thm1}]

  Statement 1: If there exists a bounded $\beta$, then $[f_1]-[f_2]$
  is well-balanced, since for any patch $P$, $\left | \int_P (f_1(x) -
    f_2(x)) d^n x \right |= \left | \int_P \alpha_1 - \alpha_2 \right
  | = \left | \int_{\partial P} \beta \right | \le \hbox{const. }
  | \partial P|.$

  Conversely, if $[f_1]-[f_2]$ is well-balanced, then $\left | \int_P
    (f_1-f_2) d^nx \right | = \left | \int_P \alpha_1 - \alpha_2
  \right |$ is bounded by a fixed multiple of $|\partial P|$. Since
  there are lower bounds to the densities $f_1$ and $f_2$, there is a
  constant $r_0$ (independent of $P$) such that the integrals of $f_1$
  and $f_2$ over every region of area at least $r_0 |\partial P|$ are
  both greater than $\left | \int_P (f_1 - f_2) d^nx \right |$. This
  implies that there is an $r$ (of the same order as $r_0$, but whose
  precise derivation requires some geometrical arguments) such that
  the integral of $f_1$ over every $r$-neighborhood of $P$ is greater
  than the integral of $f_2$ over $P$, and such that the integral of
  $f_2$ over the $r$-neighborhood is greater than the integral of
  $f_1$ over $P$. By the Hall Marriage Theorem (see also \cite{Laczkovich1})
  this implies the existence of a bounded transport from $f_1$ to
  $f_2$.

\smallskip

Statement 2: By Theorem \ref{two-three}, there exists a weakly PE
transport from $f_1$ to $f_2$ if and only if there exists a weakly PE
cochain $\beta$ such that $\delta \beta = \alpha_1 - \alpha_2$. But by
definition, that is the same as $[f_1]-[f_2]$ being asymptotically
negligible.

 \smallskip
 
 Statement 3: By Theorem \ref{two-three}, there exists a strongly PE
 transport from $f_1$ to $f_2$ if and only if there exists a strongly
 PE cochain $\beta$ such that $\delta \beta = \alpha_1 -
 \alpha_2$. But by definition, that is the same as $\alpha_1$ being
 cohomologous to $\alpha_2$, in other words of $[f_1]-[f_2]$ being
 zero.
\end{proof}

\medskip

We now return to our example problems. In the Fibonacci tiling, 
$H^1(\Omega_T, \R)=\R^2$. $H^1_{AN}$ is the contractive subspace of 
$H^1$ under substitution \cite{CS}, which is 1-dimensional, 
so $H^1(\Omega_T,\R) = \R \oplus H^1_{AN}(\Omega_T,\R)$. In particular, any
two classes with the same overall density differ by an asymptotically 
negligible class, so $[f_1-f_2]$ is asymptotically negligible, so 
it is possible to do a weakly PE transport from $f_1$ to $f_2$. 

However, it is not possible to do a strongly PE transport from $f_1$ to 
$f_2$. To see this, suppose that $f_1-f_2 = \delta \beta$, where $\beta$
is a strongly PE function on vertices, say with radius $R$. 
By repetitivity, there are two vertices $v_1$, $v_2$ whose patterns agree
to radius $R$. But then 
\begin{equation}
\int_{v_1}^{v_2} f_1 - \int_{v_1}^{v_2} f_2 = \int_{v_1}^{v_2} \delta \beta
= \beta(v_2)-\beta(v_1)=0.
\end{equation} 
However, $\int_{v_1}^{v_2} f_1$ is an integer, while $\int_{v_1}^{v_2} f_1$ is 
a multiple of $\phi$, so these integrals cannot be equal. Contradiction. 

\smallskip

Next consider the chair tiling. $H^2(\Omega_T,\R)$ is known to equal
$\R^3$, and the three generators are described as follows. For $t\in \{
NE, NW, SE, SW\}$, let $i_t$ be the indicator function of that tile, i.e.
a cochain that evaluates to 1 on every $t$ tile and 0 on the other three
kinds. 

\begin{enumerate}
\item One generator is the constant cochain $i_{NE}+i_{NW}+i_{SE}+i_{SW}$
that simply counts tiles. This generator quadruples under substitution.
\item One generator is $i_{NE}-i_{SW}$, which is the same as $f_1-f_2$. 
It doubles under substitution, and we will soon see that it is neither
WE nor WB. 
\item The third generator is a rotated version of the second, namely
$i_{NW}-i_{SE}$. It, too, is neither WE nor WB. 
\item The cochain $i_{NE}+i_{SW}-i_{NW}-i_{SE}$ evaluates to 0 on every 
substituted tile, and is cohomologically trivial.
\end{enumerate}

Since $f_2-f_3 =i_{NE}+i_{SW}-i_{NW}-i_{SE}$ is exact, there exists strongly PE
transport from $f_2$ to $f_3$, namely rearranging the mass within each 
once-substituted tile. Since $f_1-f_2$ (and hence $f_1-f_3$) is not WB, there
is no bounded transport from $f_1$ to $f_2$ (or $f_3$). 

\smallskip

\begin{figure}[htp] 
\begin{center}
 \includegraphics[scale=0.5]{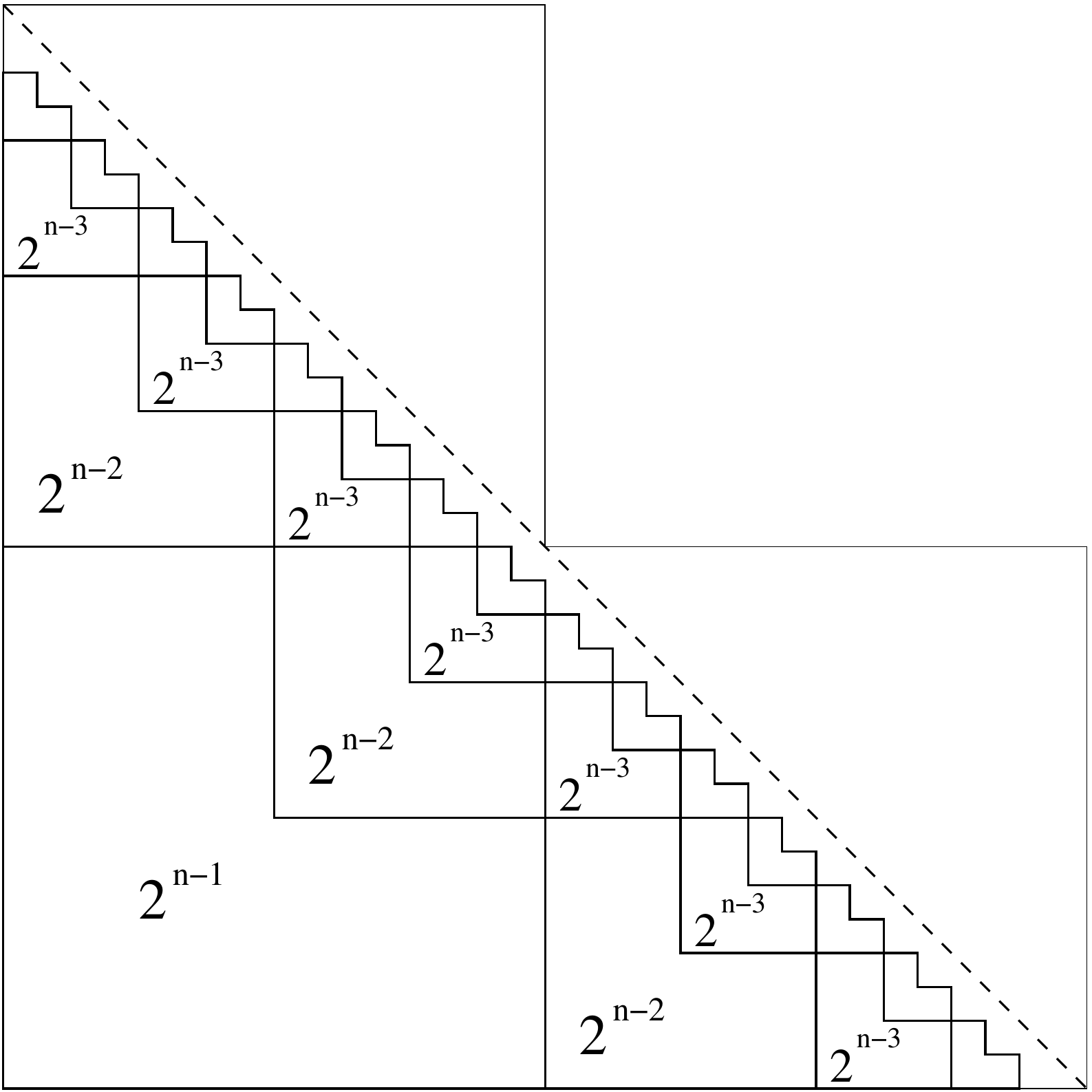}
 \caption{The mass of a partial supertile grows faster than perimeter}
\end{center}\label{partial-supertile}
\end{figure}

\smallskip

To see that $\alpha=i_{NE}-i_{SW}$ is not WB, note that $\alpha$
doubles under substitution, and therefore evaluates to $2^m$ on an
$m$-substituted $NE$ tile, also known as an {\em $m$-supertile}.  That
is, the total mass grows like the perimeter for complete
$m$-supertiles.  Now let $R_n$ be the portion of an $n$-supertile of
type NE obtained by cutting along a diagonal line as in Figure
\ref{partial-supertile} and discarding the tiles that straddle the
dividing line. $R_n$ can be subdivided into one $(n-1)$-supertile,
three $(n-2)$-supertiles, seven $(n-3)$-supertiles, $\ldots$, and
$2^n-1$ ordinary tiles, all of type $NE$. We then have
\begin{equation} \int_{R_n} \alpha = 
2^{n-1} + 3\times 2^{n-2} + \cdots + (2^n-1) \times 2^0 = (n-1)\times 2^n + 1.
\end{equation}
Since $n$ is arbitrary and $|\partial R_n| = 4 \times 2^n$, $\int_{R_n}\alpha$
cannot be bounded by a uniform constant times $|\partial R_n|$. 

Note that we have done more than simply solve our example puzzle. We
have shown that for the chair tiling, $H^2_{WB}(\Omega_\T, \R)$ is
trivial. Given {\em any} two strongly PE mass distributions $f$ and
$g$ on a chair tiling, there exists a bounded transport from $f$ to $g$
if and only if $[f-g]=0$, in which case there exists a strongly PE
transport from $f$ to $g$.

\section{Theorem \ref{Thm2}} 
\label{AN=WB}

In the last section we showed that the question: ``If a bounded
transport exists between two strongly PE mass distributions $f_1$ and
$f_2$, does there necessarily exist a weakly PR transport?'', is
equivalent to ``Is every well-balanced class is $\check H^n(\Omega_\T,
\R)$ asymptotically negligible''? In this section we generalize the
cohomological question and then give positive answers in two 
settings. Theorem \ref{Thm2} then follows as a corollary.

The tiling $\T$ defines a decomposition of $\R^n$ into 0-cells
(vertices), 1-cells (edges), 2-cells (faces), etc. Let $\{c^{(k)}_i\}$
denote the set of $k$-cells of the tiling, and pick an orientation for
each cell. (For vertices and $n$-cells there is a canonical choice of
orientation, but in the intermediate dimensions we must make some
arbitrary choices.) A $k$-chain $A_k$ is a finite linear combination
$\sum_i a_i c^{(k)}_i$, and we define $|A_k| = \sum_i |a_i|
|c^{(k)}_i|$, where $|c^{(k)}_i|$ is the $k$-dimensional Euclidean
measure of $c^{(k)}_i$.  If $\alpha$ is a $k$-cochain, then we write
$\int_{c^{(k)}_i} \alpha$ to denote the value of $\alpha$ on the cell
$c^{(k)}_i$, and $\int_{A_k} \alpha := \sum_i a_i \int_{c^{(k)}_i}
\alpha$.

We say that a strongly PE $k$-cochain $\alpha$ is {\em well-balanced}
(WB) if there exists a constant $K$ such that, for any $k$-chain
$A_k$, $\left | \int_{A_k} \alpha \right | \le K |A_k|$. We say that
$\alpha$ is {\em weakly exact} (WE), and that the class of $\alpha$ is
{\em asymptotically negligible} (AN) if there exists a
weakly PE $(k-1)$-cochain $\beta$ such that $\alpha = \delta
\beta$. These properties were previously defined for $n$-cochains, but
in fact the definitions make sense for any $k$. Stokes Theorem says
that $WE \implies WB$. The question is whether $WB \implies WE$.

\begin{prop} Every WE or WB cochain is closed.  \end{prop}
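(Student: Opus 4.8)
The plan is to handle the two cases (WE and WB) separately, since WE is the logically stronger hypothesis and its proof is nearly immediate. If $\alpha = \delta\beta$ for some weakly PE $(k-1)$-cochain $\beta$, then $\delta\alpha = \delta\delta\beta = 0$ because $\delta^2 = 0$ at the cochain level; the fact that $\beta$ is only weakly PE rather than strongly PE is irrelevant here, since $\delta\alpha$ is computed tile-by-tile from $\alpha$ alone. So the WE case reduces to the bare algebraic identity $\delta^2 = 0$, which holds for the PE cochain complex exhibited in Section~\ref{Background}.

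For the WB case I would argue by contradiction using repetitivity together with the geometric growth of chains under coboundary. Suppose $\alpha$ is well-balanced with constant $K$ but $\delta\alpha \neq 0$. Then $\delta\alpha$ is a strongly PE $(k+1)$-cochain that is nonzero on some $(k+1)$-cell $c$; say $\left|\int_c \delta\alpha\right| = m > 0$. By repetitivity (and the strong pattern equivariance of $\delta\alpha$, which has some finite radius $R$), the pattern around $c$ recurs with bounded gaps, so there is a family of $(k+1)$-cells $c_1, c_2, \dots$, all translates of $c$, on each of which $\delta\alpha$ evaluates to the same value (in absolute value $m$), and one can choose $N$ of them lying inside a region of bounded size. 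Form the $(k+1)$-chain $C_N = \sum_{j=1}^{N} \pm c_j$, choosing signs so that $\int_{C_N}\delta\alpha = Nm$. Then $\int_{\partial C_N}\alpha = \int_{C_N}\delta\alpha = Nm$, where $\partial$ here denotes the simplicial boundary operator dual to $\delta$. But $|\partial C_N| \le \sum_j |\partial c_j|$, and each $|\partial c_j|$ equals the fixed number $|\partial c|$ since the $c_j$ are translates of $c$; hence $|\partial C_N| \le N|\partial c|$. The well-balanced inequality then forces $Nm = \left|\int_{\partial C_N}\alpha\right| \le K|\partial C_N| \le KN|\partial c|$, i.e. $m \le K|\partial c|$, which is a fixed bound independent of $N$ — not yet a contradiction.

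To upgrade this to an actual contradiction I need the boundary chains $\partial c_j$ to overlap with cancellation, so that $|\partial C_N|$ grows strictly slower than $N$. This is where the real work is, and I expect it to be the main obstacle. The idea is to choose the translates $c_j$ so that they tile a large $(k+1)$-dimensional region densely (for $k+1 = n$ this is literally a patch of tiles; for smaller $k$ one works inside a $(k+1)$-dimensional slice of the tiling), so that most of the internal $k$-faces appear in two of the $\partial c_j$ with opposite orientation and cancel. Then $|\partial C_N|$ is comparable to the $k$-measure of the \emph{exterior} boundary of the region, which for a region of $(k+1)$-volume proportional to $N$ grows only like $N^{k/(k+1)}$. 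The inequality $Nm \le K\cdot(\text{const})\cdot N^{k/(k+1)}$ then fails for $N$ large, giving the contradiction. The subtlety is making the cancellation argument work when $\alpha$ is a cochain on a non-cubical cell complex and when the orientations of the intermediate-dimensional cells were chosen arbitrarily; one must check that the signs in $\partial c_j$ are consistent enough that adjacent faces genuinely cancel. Alternatively, and perhaps more cleanly, one can invoke the form-based description of PE cohomology from Theorem~\ref{Cech}: a well-balanced cochain whose coboundary is a nonzero strongly PE cochain would, after passing to a smooth representative, violate the isoperimetric-type estimate that bounds the integral of a $(k+1)$-form over a ball by the integral of its primitive over the boundary sphere, since a nonzero PE form has integral growing like volume on large balls while the well-balanced bound only permits growth like the surface area of the boundary.
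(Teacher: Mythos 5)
Your WE half is exactly the paper's argument: $\delta\alpha=\delta^{2}\beta=0$, with no role for the regularity of $\beta$. The gap is in the WB half, and it comes from which inequality you take ``well-balanced'' to mean. You apply the bound in the form $\bigl|\int_{A}\alpha\bigr|\le K|A|$, i.e.\ controlled by the $k$-measure of the chain itself (this is what the displayed definition in Section \ref{AN=WB} literally says, so the confusion is understandable), writing $\bigl|\int_{\partial C_N}\alpha\bigr|\le K|\partial C_N|$ and then trying to force $|\partial C_N|$ to grow sublinearly in $N$ by cancellation. Under that reading the proposition is simply false: every strongly PE $k$-cochain takes finitely many values on cells whose measures are bounded below, hence satisfies $\bigl|\int_{A}\alpha\bigr|\le K|A|$, including non-closed ones. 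So the ``main obstacle'' you flag is not a technicality but an unfixable one: to get boundary cancellation you must sum over \emph{all} $(k+1)$-cells of a region, at which point the cells that are not translates of $c$ contribute uncontrolled (possibly cancelling) amounts to $\int_{C_N}\delta\alpha$ and you lose the lower bound $Nm$. The operative definition --- the one consistent with the $n$-cochain definition in Section \ref{Background}, with the remark that Stokes gives $WE\implies WB$, and with the paper's own proof --- is $\bigl|\int_{A_k}\alpha\bigr|\le K|\partial A_k|$: the integral over a chain is controlled by the measure of the chain's \emph{boundary}.

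With that definition the WB half is a one-liner and none of your machinery is needed. If $\delta\alpha\ne 0$, there is a $(k+1)$-cell $c$ with $\int_{\partial c}\alpha=\int_{c}\delta\alpha\ne 0$; the $k$-chain $A_k=\partial c$ is a cycle, $\partial A_k=\partial^{2}c=0$, so the well-balanced bound gives $\bigl|\int_{A_k}\alpha\bigr|\le K\cdot 0=0$, a contradiction. Equivalently, your own inequality, correctly instantiated, reads $Nm\le K|\partial(\partial C_N)|=0$ already at $N=1$. Repetitivity, the accumulation of translates, and the isoperimetric estimate in your closing alternative (which also fails as stated, since a nonzero strongly PE $(k+1)$-form can have zero mean and need not have integral growing like volume) all play no role in this proposition.
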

 
 \begin{proof} If $\alpha$ is WE, then $\alpha = \delta \beta$, 
so $\delta \alpha = \delta^2 \beta = 0$, so $\alpha$ is closed. 
 
Next suppose that $\alpha$ is WB but not closed. Since $\alpha$ is not
closed, there exists a chain $A_k$ such that $\partial A_k=0$ but
$\int_{A_k} \alpha \ne 0$. But then $\left | \int_{A_k} \alpha \right
|$ is not bounded by $K |\partial A_k|$, so $\alpha$ is not
WB. Contradiction.
 \end{proof} 

 Since the coboundaries of strongly PE cochains are both WE and WB,
 cohomologous cochains are either both WE or neither WE, and are
 either both WB or neither. We can therefore speak of a cohomology
 {\em class} being AN or WB, and define subspaces $\check
 H^k_{AN}(\Omega_\T, \R)$ and $\check H^k_{WB}(\Omega_\T, \R)$ of
 $\check H^k(\Omega_\T, \R)$.

\medskip

The situation when $k=1$ is simple:

\begin{thm}\label{k_equals_1} If $\T$ is a repetitive aperiodic tiling of $\R^n$ with FLC, then every strongly PE and WB 1-cochain is WE. 
\end{thm}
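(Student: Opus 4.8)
The plan is to pass to the primitive of $\alpha$ and to recognize the statement as an instance of the Gottschalk--Hedlund theorem. Since $\alpha$ is well-balanced it is closed, and since $\R^n$ is simply connected it has a primitive; I would work in the de Rham model of PE cohomology (Theorem~\ref{Cech}), represent the class by a closed strongly PE $1$-form, and write $\alpha=d\beta_0$ with $\beta_0(x)=\int_0^x\alpha$ (the integral along the segment $[0,x]$), a function on $\R^n$ unique up to an additive constant. In degree $1$, well-balancedness of $\alpha$ is precisely the statement that $\beta_0$ is bounded: the boundary of an edge path from $v$ to $w$ is the two-point chain $\{v\}-\{w\}$, so the well-balanced estimate gives a uniform bound on $|\beta_0(v)-\beta_0(w)|$; after normalizing the additive constant we may assume $\|\beta_0\|_\infty=:B$. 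Weak exactness of $\alpha$ is exactly the assertion that $\beta_0$ is weakly PE, i.e. that $x\mapsto\beta_0(x)$ is a continuous function of the tiling $\T-x\in\Omega_\T$, and this is what must be shown.

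Next I would introduce the associated cocycle. Because $\alpha$ is strongly PE it determines a closed strongly PE $1$-form $\alpha_\omega$ on every $\omega\in\Omega_\T$, depending continuously on $\omega$, and I set $\hat c(x,\omega)=\int_0^x\alpha_\omega$. Then $\hat c\colon\R^n\times\Omega_\T\to\R$ is continuous and is an additive cocycle, $\hat c(x+y,\omega)=\hat c(x,\omega)+\hat c(y,\omega-x)$; on the dense orbit one has $\hat c(x,\T-y)=\beta_0(x+y)-\beta_0(y)$, so $|\hat c|\le 2B$ there, hence everywhere by continuity. Thus $\hat c$ is a uniformly bounded continuous real cocycle over the minimal $\R^n$-action on the compact space $\Omega_\T$ (minimality being repetitivity of $\T$). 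Now set $g(\omega)=\sup_{x\in\R^n}\bigl(-\hat c(x,\omega)\bigr)$. This is bounded, $0\le g\le 2B$, it is lower semicontinuous (a supremum of continuous functions), and the cocycle identity gives the cohomological equation $g(\omega-z)-g(\omega)=\hat c(z,\omega)$ for all $z$ and $\omega$. Putting $\omega=\T$ yields $\beta_0(z)=g(\T-z)-g(\T)$, so once $g$ is known to be continuous the proof is complete: $z\mapsto g(\T-z)$ is then a continuous function of $\T-z$, i.e. weakly PE, hence so is $\beta_0$, and $\alpha=d\beta_0$ is weakly exact.

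The main obstacle is exactly the continuity of $g$; this is the Gottschalk--Hedlund mechanism, and it is here that minimality is used. I would argue as follows. Let $\bar g(\omega)=\limsup_{\omega'\to\omega}g(\omega')$ be the upper semicontinuous regularization of $g$. Since each $\hat c(z,\cdot)$ is continuous, $\bar g$ satisfies the same cohomological equation, so $h:=\bar g-g\ge 0$ is invariant under the $\R^n$-action. On the compact space $\Omega_\T$, $h$ is a difference of an upper and a lower semicontinuous function, hence upper semicontinuous, so it attains its maximum $s$; being invariant, $h$ equals $s$ on the orbit of a maximizing point, which is dense by minimality, and upper semicontinuity then forces $h\equiv s$. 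But $g$, being lower semicontinuous, is continuous on a dense residual subset of $\Omega_\T$, on which $h=0$; therefore $s=0$, $h\equiv 0$, and $g=\bar g$ is continuous. (Equivalently, one invokes the $\R^n$-action version of the Gottschalk--Hedlund theorem: a uniformly bounded continuous real cocycle over a minimal action is the coboundary of a continuous transfer function.) With $g$ continuous, the conclusion follows as in the preceding paragraph.
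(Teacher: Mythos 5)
Your proposal is correct and follows the same route as the paper: the paper's proof of Theorem \ref{k_equals_1} is a one-line appeal to the Gottschalk--Hedlund theorem \cite{GottschalkHedlund} and its higher-dimensional version in \cite{KS}, and your argument is precisely a worked-out proof of that theorem for the minimal $\R^n$-action on $\Omega_\T$ (bounded continuous cocycle $\Rightarrow$ continuous transfer function, via the semicontinuous regularization of $g(\omega)=\sup_x(-\hat c(x,\omega))$). The only point needing care is your translation of well-balancedness into boundedness of the primitive $\beta_0$, which uses the estimate $\left|\int_{A_1}\alpha\right|\le K|\partial A_1|$; this is the reading of the definition consistent with the paper's remark that Stokes gives $WE\implies WB$ and with its proof that WB cochains are closed, and under it your reduction is exactly right.
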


\begin{proof} When $n=1$ this is a special case of the classical
  Gottschalk-Hedlund theorem \cite{GottschalkHedlund}. Many generalization for
  higher-dimensional actions have been proven over the years. A proof
  of this specific situation can be found in \cite{KS}.  \end{proof}

The situation when $k=0$ is even simpler. The only WB cochain is the
zero cochain, which is WE.

\medskip

A {\em stepped plane} is a canonical projection tiling from 3 to 2
dimensions. We use the term more generally for a canonical projection
tiling from $\R^{n+1}$ to $\R^n$.  Recall that an $m$ to $n$
dimensional projection tiling is {\em canonical} if the window in the
internal space $\R^{m-n}$ is the projection of a unit cube in $\R^m$.

\begin{thm}\label{stepped-thm} 
Let $\T$ be a stepped plane in $\R^n$, and let $\alpha$
  represent a strongly PE and WB class in $\check H^k(\Omega_\T,
  \R)$. Then $\alpha$ is WE.
\end{thm}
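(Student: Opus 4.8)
The plan is to exploit the very concrete structure of a codimension-1 canonical projection tiling: its continuous hull $\Omega_\T$ fibers over a torus $\Torus^{n+1}$ with Cantor-set fibers, and the cohomology $\check H^k(\Omega_\T,\R)$ is built from the cohomology of $\Torus^{n+1}$ together with the "extra" classes coming from the Cantor direction (the eigenvalue/transverse part). Concretely, since $\T$ is obtained by projecting the points of $\Z^{n+1}$ lying in a slab around an irrational $n$-plane $E \subset \R^{n+1}$, every strongly PE $k$-cochain can be encoded as a function on the (compact, totally disconnected) canonical transversal, and a PE cochain pulls back from a finite cylinder-set decomposition of that transversal. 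So first I would set up this dictionary carefully, identifying $\Omega^k(\T)$ with an appropriate space of locally constant functions on the transversal crossed with the combinatorial data of which $k$-cell one is sitting on.

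**Second**, I would decompose a given closed WB cochain $\alpha$ into a piece pulled back from $\Torus^{n+1}$ (i.e.\ a genuinely periodic cochain, after identifying the torus with the quotient by $\Z^{n+1}$) plus a remainder. The periodic part: a nonzero periodic class in degree $k < n+1$ that is WB must in fact be zero, or at least WE, because on a periodic pattern well-balancedness forces the cochain to be a genuine coboundary — this is essentially the degenerate $k=0$ remark iterated, plus the fact that the torus itself contributes nothing WB in the relevant degrees except the top, which is excluded here since $k \le n$. The remainder then lies in the "transverse" part of the cohomology, where I can use that the cohomology of $\Omega_\T$ in the transverse direction is a direct limit of cohomologies of tori under the substitution/renormalization (for canonical codimension-1 tilings this is a well-understood approximant structure, as in \cite{AP,FHK}). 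On that direct limit, the WB condition cuts out precisely the classes that decay under renormalization, and those are exactly the AN classes — the same mechanism as in the Fibonacci example in Section \ref{Pictures}, where $H^1_{AN}$ is the contractive subspace.

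**The key technical step**, and the one I expect to be the main obstacle, is turning "WB" into an explicit decay estimate along the renormalization and then, from that decay, actually constructing the weakly PE cochain $\beta$ with $\delta\beta = \alpha$. The construction of $\beta$ should go as follows: because $\alpha$ is WB, for each approximant level $m$ one can solve $\delta \beta_m = \alpha$ on the $m$-th approximant with $\beta_m$ bounded uniformly (the bound coming from the WB constant $K$ and the bounded geometry of supertiles), then check that the $\beta_m$ converge uniformly — the differences $\beta_{m+1}-\beta_m$ being closed and having norm controlled by the decay rate coming from WB applied to the thin "transition regions" between levels. The uniform limit is then a weakly PE cochain, as desired. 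The delicate point is that well-balancedness is an inequality over \emph{all} chains, including highly non-convex ones straddling many supertiles, so one must argue that solvability with uniformly bounded $\beta_m$ is not just possible on each approximant but compatible across approximants; this is where the codimension-1 and canonical-window hypotheses really earn their keep, since they guarantee that the approximants are tori and that the renormalization acts on $\check H^k$ with eigenvalues one can control.

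**Finally**, I would note that Theorem \ref{Thm2} follows: taking $k=n$ gives that every WB class in $\check H^n(\Omega_\T,\R)$ is AN, and combined with Theorem \ref{Thm1} this yields the stated equivalence of bounded and weakly PE transport. The $n=1$ case of Theorem \ref{Thm2} is subsumed since a 1-dimensional tiling is itself a codimension-1 (indeed codimension-0) situation handled directly, or alternatively follows from Theorem \ref{k_equals_1} with $k=n=1$.
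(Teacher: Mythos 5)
Your overall strategy diverges from the paper's, and it diverges exactly where it leaves a hole. The paper's proof is a short explicit computation: since the window is an interval, $\Omega_\T$ is the torus $T=\R^{n+1}/\Z^{n+1}$ with a hyperplane doubled, so $\check H^k(\Omega_\T,\R)\cong H^k(T,\R)$ for $k\le n$ and $\check H^*(\Omega_\T,\R)$ is freely generated as an exterior algebra by $\check H^1$. It then invokes the Kellendonk--Sadun theorem that $\dim\check H^1_{AN}(\Omega_\T,\R)=m-n=1$ for polyhedral cut-and-project tilings to produce a basis $dx^1,\dots,dx^n,df$ of $\check H^1$ with $f$ weakly PE, so that $\check H^k$ is spanned by the $dx^I$ and the $df\wedge dx^J$. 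The forms $df\wedge dx^J=d(f\,dx^J)$ are manifestly WE, and no nonzero combination of the $dx^I$ is WB (integrate over large flat $k$-chains, whose $k$-volume outgrows their boundary measure); hence $\check H^k_{WB}=\check H^k_{AN}=\mathrm{span}\{df\wedge dx^J\}$. No approximant-by-approximant construction of $\beta$ is ever needed.

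Two concrete problems with your version. First, your ``periodic part versus transverse remainder'' decomposition is not the right dichotomy: for $k\le n$ \emph{every} class on $\Omega_\T$ is pulled back from the torus, including the nonzero WE classes $df\wedge dx^J$ and the non-WB classes $dx^I$, so the claim that a WB periodic class must be zero or WE ``because well-balancedness on a periodic pattern forces a coboundary'' establishes nothing here, and there is no Gottschalk--Hedlund-type statement to iterate in degrees $1<k<n$. The input you are missing is the computation of $\check H^1_{AN}$ together with the exterior-algebra structure, which together reduce the degree-$k$ question to degree $1$. Second, your ``key technical step'' --- solving $\delta\beta_m=\alpha$ on each approximant with uniform bounds and proving uniform convergence of the $\beta_m$ from a decay estimate extracted from WB --- is precisely the content of the theorem; you flag it yourself as the main obstacle but supply no mechanism for converting the WB inequality (a bound over all chains) into a contraction rate for $\beta_{m+1}-\beta_m$ along the renormalization. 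As written the proposal is a plausible research plan rather than a proof, and the route actually taken in the paper shows the hard analytic step can be avoided entirely.
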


\begin{proof} Since $\T$ is a canonical cut-and-project tiling from
  $n+1$ to $n$ dimensions, the window for the projection is an
  interval whose endpoints correspond to the same hyperplane $H$ in
  the torus $T=\R^{n+1}/\Z^{n+1}$.  As a topological space,
  $\Omega_\T$ is then obtained from $T$ by removing $H$ and gluing in
  two copies of $H$, each representing a limit from one side.  The \v
  Cech cohomology of $\Omega_\T$ is then isomorphic to the cohomology
  of $T$ with one point removed. That is, $\check H^k(\Omega_\T,\R)$
  is isomorphic to $H^k(T,\R)$ for $k=0,\ldots, n$. In particular,
  $\check H^*(\Omega_\T, \R)$ is freely generated as an exterior
  algebra, in dimensions up through $n$, by $\check H^1(\Omega_\T,
  \R)$.

Kellendonk and Sadun \cite{KS2} proved that, for $m$-to-$n$
dimensional cut-and-project tilings with polyhedral windows, $\dim
\check H^1_{AN}(\Omega_\T, \R) = m-n$. For stepped planes, this means
that we can choose a basis for $\check H^1(\Omega_\T, \R)$ such that
one basis element can be represented, using the de Rham version of
cohomology, by a form $df$, where $f$ is a weakly PE function, and
that the other $n$ basis elements can be represented the constant
forms $dx^i$. A basis for $\check H^k$ is then given by the forms
$dx^I := dx^{i_1} \wedge \cdots \wedge dx^{i_k}$ for multiindices
$I=\{i_1,\ldots,i_k\}$, and $df \wedge dx^J$ for multiindices
$J=\{j_1, \ldots, j_{k-1}\}$.

Note that $df \wedge dx^J = d(f dx^J)$ is WE, and hence WB, 
while $dx^I$ (and all nonzero linear combinations of the $dx^I$'s) is
not WB, and hence not WE. This implies 
that $\check H^k_{AN}(\Omega_\T,\R)$ and $\check H^k_{WB}(\Omega_\T,\R)$
are both the span of the $df\wedge dx^J$'s, and hence are equal.  \end{proof}

Theorem \ref{Thm2} is just the $k=n$ case of Theorems 
\ref{k_equals_1} and \ref{stepped-thm}.

\section{Concluding Remarks}
There has been a burst of activity in recent years studying the
bounded displacement (BD) equivalence relation for tilings and Delone
sets
\cite{Haynes,HK,HKK,HKW,KellySadun,OpenProblems,Solomon,Solomon2}. As
we reported in our previous paper \cite{KellySadun}, there are many
relevant papers contributing to the subject that predate the
terminology BD such as \cite{DO2,DO1,Kesten}. The papers
\cite{DO2,DO1} in particular make the connection to quasicrystals
explicit. They studied the question of when a cut-and-project set is
BD to a crystal. When a mathematical quasicrystal can be written as a
small perturbation of a mathematical crystal (via a BD mapping), then
it may be possible that the quasicrystal can be constructed from the
crystal by a {\it displacive} phase transition. A closely related
notion to BD is that of a bounded remainder set (BRS). There has been
quite a bit of activity (including new cohomological work) in this
subject area in recent years
\cite{GrepLev,GrepLev2,GrepLev3,HK,HKK,KozmaLev,KellySadun}.\newline

In a recent work \cite{DKLL} proved many analogues of the Euclidean
results on the BD and BL equivalence relations for connected and
simply connected nilpotent Lie groups (with respect to both the
Riemannian and Carnot-Carth\'eodory metrics. Note that these groups
are topologically Euclidean.). The results in \cite{DKLL} seem to
indicate that Euclidean methods are fairly adaptable to the nilpotent
setting, as far as BD is concerned. However, we are not aware of any
work on the cohomology of tiling spaces for such groups. It would
be interesting if the results of the present article had analogues in
connected simply connected nilpotent Lie groups.

%% If you want to regenerate the bibliography from the bib file, then uncomment the following and run bibtex paper4
%% \nocite{*}
%% \bibliographystyle{plain}
%% \bibliography{BD}

\end{document}